\DeclareMathOperator{\spn}{span}
\newtheorem{theorem}{Theorem}
\newtheorem*{theorem*}{Theorem}
\newtheorem{proposition}{Proposition}
\newtheorem{definition}{Definition}
\newcommand{\bdiv}{%
  \nonscript\mskip-\medmuskip\mkern5mu%
  \mathbin{\operator@font div}\penalty900\mkern5mu%
  \nonscript\mskip-\medmuskip
}
\definecolor{darkgreen}{rgb}{0,0.5,0}
\newcommand{\desmond}[1]{\ifthenelse{\boolean{showcomments}}
{ \textcolor{red}{(Desmond says:  #1)}}{}}
\newcommand{\adam}[1]{\ifthenelse{\boolean{showcomments}}
{ \textcolor{red}{(Adam says:  #1)}}{}}
\newcommand{\enrique}[1]{\ifthenelse{\boolean{showcomments}}
{ \textcolor{red}{(Enrique says:  #1)}}{}}
\newcommand{\hide}[1]{}
\newcommand{\ubar}[1]{\underaccent{\bar}{#1}}
\begin{document}
\title{Distributed optimization decomposition for joint economic dispatch and frequency regulation}

\author{
Desmond Cai$^*$, Enrique Mallada$^\dagger$ and Adam Wierman$^{\dagger\dagger}$%
\thanks{$^*$Desmond Cai is with Department of Electrical Engineering,
        California Institute of Technology. Email:
        {\tt\small wccai@caltech.edu}.}%
\thanks{$^{\dagger}$Enrique Mallada is with the Department of Electrical and Computer Engineering, Johns Hopkins University. Email: {\tt\small mallada@jhu.edu}.}
\thanks{$^{\dagger\dagger}$Adam Wierman is with the Department of Computing and Mathematical Sciences,
        California Institute of Technology. Email:
        {\tt\small adamw@caltech.edu}.}
\thanks{This work was supported
by ARPA-E grant DE-AR0000226, Los Alamos National Lab through
an DoE grant DE-AC52-06NA25396, DTRA through grant HDTRA
1-15-1-0003, Skoltech, NSF grant 1545096 as part of the
NSF/DHS/DOT/NASA/NIH Cyber-Physical Systems Program, NSF grant
NETS-1518941, and NSF grant EPAS-1307794.  A preliminary and abridged version has appeared in~\cite{CaiMalladaWierman2015a}.}%
\thanks{The authors would like to thank Ben Hobbs from John Hopkins and Steven H. Low from Caltech for insightful discussions.}
}


\maketitle

\begin{abstract}
Economic dispatch and frequency regulation are typically viewed as fundamentally different problems in power systems and, hence, are typically studied separately. In this paper, we frame and study a joint problem that co-optimizes both slow timescale economic dispatch resources and fast timescale frequency regulation resources. We show how the joint problem can be decomposed without loss of optimality into slow and fast timescale sub-problems that have appealing interpretations as the economic dispatch and frequency regulation problems respectively. We solve the fast timescale sub-problem using a distributed frequency control algorithm that preserves the stability of the network during transients. We solve the slow timescale sub-problem using an efficient market mechanism that coordinates with the fast timescale sub-problem. We investigate the performance of the decomposition on the IEEE 24-bus reliability test system.
\end{abstract}

\begin{IEEEkeywords}
Economic dispatch, frequency regulation, optimization decomposition, markets.
\end{IEEEkeywords}

%
\IEEEpeerreviewmaketitle

\section{Introduction}

One of the major objectives of every Independent System Operator (ISO) is to schedule generation to meet demand at every time instant~\cite{WoodWollenberg1996, BergenVittal2000, MachowskiBialek2008}. This is a challenging task -- it involves responding rapidly to supply-demand imbalances, minimizing generation costs, and respecting operating limitations (such as ramp constraints, capacity constraints, and line constraints). Due to the complexity of this global system operation problem, it is typically divided into two separate problems: \emph{economic dispatch}, which focuses on control of slower timescale resources and is solved using market mechanisms, and \emph{frequency regulation}, which focuses on control of faster timescale resources and is solved using engineered controllers. Economic dispatch and frequency regulation are typically studied independently of each other.

\emph{Economic dispatch} operates at the timescale of 5 minutes or longer and focuses on cost efficiency. In particular, the economic dispatch problem seeks to optimally schedule generators to minimize total generation costs. Economic dispatch has a long history~\cite{WoodWollenberg1996,carpentier1979optimal,kirschen2004front,baldick2004theory,caisoOptimization,schweppe1988spot}. It is currently implemented using a market mechanism known as supply function bidding. In this mechanism, generators submit supply functions to the ISO which specify (as a function of price) the quantity a generator is willing to produce. The ISO solves a centralized optimization problem (over single or multiple time periods) to schedule generators to minimize system costs while satisfying demand and slow timescale operating constraints (such as line constraints, capacity constraints, ramping constraints, security constraints, etc.). Each generator is compensated at the locational marginal price (LMP) which reflect the system cost of serving an incremental unit of demand at its node.

\emph{Frequency regulation} operates at a faster timescale (from a few minutes to 30 seconds) and focuses on stability rather than efficiency. In particular, the ISO seeks to restore the nominal frequency in the system by rescheduling fast ramping generators. Frequency regulation has a long history~\cite{IbraheeKkumarKothari2005,BergenVittal2000,deMello:1973jy}. It is currently implemented by a mechanism known as Automatic Generation Control (AGC). In this mechanism, the ISO computes the aggregate generation that would rebalance power within each independent control area (and hence restore nominal frequency) and allocates the imbalance generation among generators based on the solution of the previous economic dispatch run~\cite{WoodWollenberg1996}. These allocations determine the setpoints in a distributed control algorithm that drives the power system to a stable operating point using local information on frequency deviations. The generators are compensated at the LMP from the previous economic dispatch run.



\subsection{Contributions of this paper}

While economic dispatch and frequency regulation each have large and active literatures; these literatures are almost completely disparate. While there have been studies on integrating the two mechanisms more efficiently~\cite{ThatteZhangXie2011}, we are not aware of any rigorous analysis of whether their combination solves the global system operator's goal of dispatching generation resources efficiently across both timescales. The goal of this paper is to initiate such a study.


Our main result provides an initial answer. In the context of a DC power flow model and two classes of generators (dispatch and regulation), we show that the global system operator's problem can be decomposed into two sub-problems that correspond to the economic dispatch and frequency regulation timescales, without loss of optimality, as long as the ISO is able to estimate the difference between the average LMP in the frequency regulation periods and the LMP in the economic dispatch period (Theorem \ref{thm:decomposition}). This result can be viewed as a first-principles justification for the existing separation of power systems control into economic dispatch and frequency regulation problems. Moreover, this result provides a guide to modify the existing architecture to \emph{optimally} control power systems across timescales. In particular, using this result, we design an optimal control policy for frequency regulation and an optimal market mechanism for economic dispatch, in a way such that the control and market mechanisms jointly solve the global system operator's problem. Our mechanims differ from existing economic dispatch and frequency regulation mechanisms in important ways.



In the case of frequency regulation (Section \ref{sec:frequency}), our mechanism has a key advantage over the AGC mechanism in that our mechanism is efficient. The frequency regulation controller proposed in this paper is built on the distributed controller in \cite{ZhaoMalladaLowBialek2016,2014arXiv1410.2931M} and controls generation based on information about generators' costs in a way such that the power system converges to an operating point that minimizes system costs. On the other hand, AGC allocates generation based on participation factors, which might not reflect actual costs, and hence the resulting allocation might not be efficient. In \cite{LiChenZhao2014}, the authors proposed a modification of the participation factors so that the AGC mechanism is cost efficient. However, unlike our mechanism, the mechanism in \cite{LiChenZhao2014} does not respect line constraints.



In the case of economic dispatch (Section~\ref{sec:market}), our mechanism has a key advantage over the existing economic dispatch operations in that it coordinates efficiently with the frequency regulation timescale. This coordination does not require additional communication in the market beyond the existing mechanism used in practice. This coordination involves two main components. First, our economic dispatch mechanism communicates the supply function bids from the generators to the frequency regulation mechanism, which uses them in the distributed controllers to allocate frequency regulation resources efficiently. In contrast, the AGC mechanism allocates frequency regulation resources without regard to generation costs. Second, our economic dispatch mechanism accounts for the value that economic dispatch resources provide to frequency regulation. It does so by adjusting the resource costs in the economic dispatch objective based on the difference between the LMP in the frequency regulation periods and that in the economic dispatch period. In contrast, the existing economic dispatch objective does not perform this adjustment and hence might allocate economic dispatch resources inefficiently.

In practice, the ISO is unlikely to be able to estimate exactly the adjustment it should make to the economic dispatch objective. In Section~\ref{sec:simulation}, we investigate numerically the sensitivity of the suboptimality of our decomposition to those estimation errors on the IEEE 24-bus reliability test system.

\section{System Model}

Our aim is to understand how the combination of economic dispatch and frequency regulation can dispatch generation resources efficiently across both timescales. To this end, we formulate a model of the global objective that includes balancing supply and demand at both timescales. We use a DC power flow model and consider two generation types -- dispatch and regulation -- which differ in responsiveness.

Consider a connected network consisting of a set of nodes $N$ and a set of links $L$. We focus on a single economic dispatch interval of the real-time market which is typically 5 minutes in existing markets. We partition this time interval into $K$ discrete periods numbered $1,\ldots,K$. In general, the length of each period may range from as little as seconds to as long as minutes. However, in this work, we focus on the case where each period is on the order tens of seconds.

\subsection{Stochastic demand}

We use a stochastic demand model motivated by the frameworks in~\cite{CarpentierGohenCulioliRenaud1996,TakritiBirgeLong1996,OzturkMazumdarNorman2004}. Assume that there is a set of possible demand outcomes $S$ that can be described by a scenario tree (an example is given in \figurename~\ref{fig:scenario-tree}). For each outcome $s\in S$, let $d_{s,n}\in\mathbb{R}$ denote the real power demand at node $n\in N$ and $\mathbf{d}_s := (d_{s,n}, n \in N) \in \mathbb{R}^N$ denote the vector of demands at all nodes. In addition, let $\kappa(s)\in\left\{1,\ldots,K\right\}$ denote the period of this outcome and $p_s$ denote the probability of this outcome conditioned on the information that the period is $\kappa(s)$. Hence, $\sum_{\left\{s|\kappa(s)=k\right\}} p_s = 1$ for each $k\in\left\{1,\ldots,K\right\}$. Without loss of generality, we assume that $\kappa(1) = 1$ and $p_1 = 1$. That is, there exists an outcome labeled $1 \in S$ associated with period $1$ and the demand in that period is deterministic.

\begin{figure}
\centering
\includegraphics[width=0.33\textwidth]{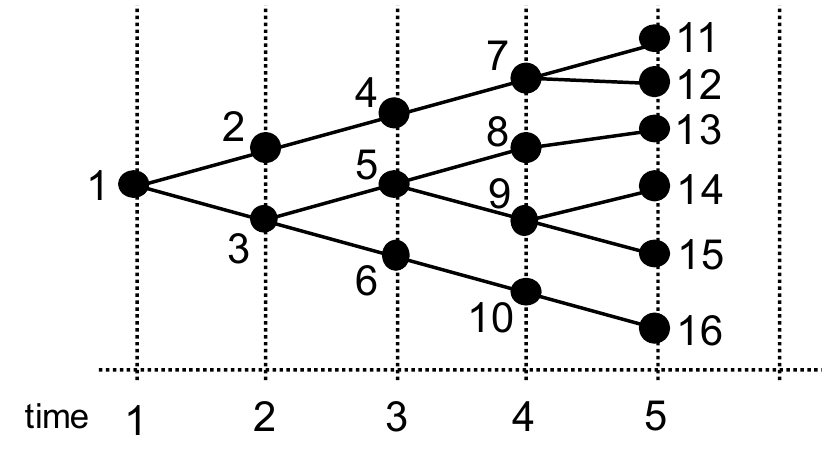}
\caption{Example of a scenario tree with $S=16$ outcomes over $K=5$ periods. The outcomes are numbered $1,\ldots,S$.}
\label{fig:scenario-tree}
\end{figure}

\subsection{Generation}

We assume that each node $n\in N$ has two generators -- a dispatch generator and a regulation generator -- where the regulation generator is more responsive than the dispatch generator. To model the differing responsiveness, we assume that the dispatch generator produces at a constant level over the entire economic dispatch interval while the regulation generator may change its production level every period after uncertain demand is realized~\cite{ElaOMalley2012}. Formally, we assume that the dispatch generator produces $q_n^b \in \mathbb{R}$ in all outcomes, and the regulation generator produces $q_n^p \in \mathbb{R}$ in period $1$ and $q_n^p + r_{s,n}^p \in \mathbb{R}$ in each subsequent outcome $s \in S\setminus\{1\}$. Hence, $q_n^p$ and $r_{s,n}^p$ can be interpreted as the regulation generator's setpoint and recourse respectively. To simplify notations, we define a dummy variable $r_{1,n}^p := 0$ so that we may write the regulation generator's production in period $1$ as $q_n^p + r_{1,n}^p$. We assume that the regulation and dispatch generators have capacity constraints $[\ubar{q}_n^p,\bar{q}_n^p]$ and $[\ubar{q}_n^b,\bar{q}_n^b]$ respectively, and incur costs $c_n^p(q_{n}^p+r_{s,n}^p)$ and $c_n^b(q_{n}^b)$ respectively in period $\kappa(s)$, where the functions $c_n^p\!:\![\ubar{q}_n^p,\bar{q}_n^p] \!\rightarrow\! \mathbb{R}_{+}$ and $c_n^b\!:\![\ubar{q}_n^b,\bar{q}_n^b]\!\rightarrow\!\mathbb{R}_{+}$ are strictly convex and continuously differentiable. 

Define vectors $\mathbf{q}^p \!:=\! (q_{n}^p, n\in N)$, $\mathbf{r}_s^p \!:=\! (r_{s,n}^p, n\in N)$, $\mathbf{q}^b \!:=\! (q_{n}^b, n\in N)$,  $\ubar{\mathbf{q}}^p \!:=\! (\ubar{q}_n^p, n\in N)$, $\ubar{\mathbf{q}}^b\!:=\! (\ubar{q}_n^b, n\in N)$, $\bar{\mathbf{q}}^p \!:=\! (\bar{q}_n^p, n\in N)$, $\bar{\mathbf{q}}^b\!:=\! (\bar{q}_n^b, n\in N)$. Then the generation constraints in outcome $s\in S$ are given by:
\begin{align}
\ubar{\mathbf{q}}^b & \leq \mathbf{q}^b \leq \bar{\mathbf{q}}^b,
\label{eq:baseload-constraint}
\\
\ubar{\mathbf{q}}^p & \leq \mathbf{q}^p + \mathbf{r}_s^p \leq \bar{\mathbf{q}}^p.
\label{eq:peaker-constraint}
\end{align}
We also let the vector $\mathbf{r}^p := (\mathbf{r}^p_s, s\in S)$.

\subsection{Network constraints}
\label{sec:model-network-constraints}

Note that $\mathbf{q}^b + \mathbf{q}^p + \mathbf{r}_s^p - \mathbf{d}_s$ is the vector of nodal injections for $s \in S$. Thus, the supply-demand balance constraint is:
\begin{align}
\mathbf{1}^\top ( \mathbf{q}^b + \mathbf{q}^p + \mathbf{r}_s^p - \mathbf{d}_{s} ) = 0,
\label{eq:supply-demand-constraint}
\end{align}
where $\mathbf{1}\in\mathbb{R}^N$ denotes the vector of all ones.

We adopt the DC power flow model for line flows. Let $\theta_{s,n}$ denote the phase angle of node $n$. Without loss of generality, assign each link $l$ an arbitrary orientation and let $i(l)$ and $j(l)$ denote the tail and head of the link respectively. Let $B_l$ denote the sensitivity of the flow with respect to changes in the phase difference $\theta_{s,i(l)}-\theta_{s,j(l)}$ and let $v_{s,l}$ denote its power flow. Define the vectors $\boldsymbol\theta_s := (\theta_{s,n}, n\in N)$ and $\mathbf{v}_s := (v_{s,l}, l\in L)$ and the matrix $\mathbf{B}:=\text{diag}(B_l, l\in L)$. Then, the line flows are given by
$\mathbf{v}_s = \mathbf{B}\mathbf{C}^\top \boldsymbol\theta_s$
where $\mathbf{C}\in\mathbb{R}^{N \times L}$ is the incidence matrix of the directed graph. And the injections are:
\begin{align}
\mathbf{q}^b + \mathbf{q}^p + \mathbf{r}_s^p - \mathbf{d}_s = \mathbf{C} \mathbf{v}_s = \mathbf{L}\boldsymbol\theta_s,
\label{eq:supply-demand-constraint-2}
\end{align}
where $\mathbf{L} := \mathbf{C}\mathbf{B}\mathbf{C}^\top$.

Note that~\eqref{eq:supply-demand-constraint} and~\eqref{eq:supply-demand-constraint-2} are equivalent. For any set of injections that satisfy~\eqref{eq:supply-demand-constraint}, we can always find $\boldsymbol\theta_s$ that satisfies~\eqref{eq:supply-demand-constraint-2}. Conversely, since $\mathbf{1}^\top \mathbf{C} = 0$, any injections that satisfy~\eqref{eq:supply-demand-constraint-2} also satisfy~\eqref{eq:supply-demand-constraint}. Hence, the line flows can be written in terms of the power injections:
\begin{equation*}
\mathbf{v}_s = \mathbf{B}\mathbf{C}^\top \mathbf{L}^\dagger (\mathbf{q}^b + \mathbf{q}^p + \mathbf{r}_s^p - \mathbf{d}_s),
\end{equation*}
where $\mathbf{L}^\dagger$ denotes the pseudo-inverse of $\mathbf{L}$. Let $\mathbf{H}:= \mathbf{B}\mathbf{C}^\top \mathbf{L}^\dagger$. Let $f_l$ denote the capacity of line $l$ and define the vector $\mathbf{f} := (f_l, l\in L)$. Then the line flow constraints are:
\begin{align}
-\mathbf{f} \leq \mathbf{H}\left(\mathbf{q}^b + \mathbf{q}^p + \mathbf{r}_s^p - \mathbf{d}_s \right) \leq \mathbf{f}.
\label{eq:line-flow-constraint}
\end{align}
To simplify notations, we define the set $\Omega(\mathbf{d}_s)$ of feasible generation for a given demand vector $\mathbf{d}_s$ as:
\begin{align*}
\Omega(\mathbf{d}_s)
:=
\left\{(\mathbf{q}^b,\mathbf{q}^p,\mathbf{r}_s^p)  :  \eqref{eq:baseload-constraint}, \eqref{eq:peaker-constraint}, \eqref{eq:supply-demand-constraint}, \eqref{eq:line-flow-constraint} \; \text{holds} \right\}.
\end{align*}

\subsection{System operator's objective}

The global system operator's objective is to allocate the dispatch and regulation generations $(\mathbf{q}^b, \mathbf{q}^p, \mathbf{r}^p)$ to minimize the expected cost of satisfying demand and operating constraints. This is formalized as follows.
\begin{equation*}
\begin{array}{rl}
SYSTEM:\underset{{\mathbf{q}}^b,{\mathbf{q}}^p,{\mathbf{r}}^p}{\mathrm{min}} &\!\! \displaystyle\sum_{s\in S} p_s \sum_{n\in N} \left( c_n^b(q_{n}^b) + c_n^p(q_{n}^p + r_{s,n}^p) \right)
\\
\mathrm{s.t.}\;\;\; &\!\! (\mathbf{q}^b,\mathbf{q}^p,\mathbf{r}_s^p) \in \Omega(\mathbf{d}_s), \quad \forall s\in S,
\\
&\!\! \mathbf{r}_1^p = \mathbf{0}.
\end{array}
\end{equation*}
We assume that this optimization is feasible. Note that $SYSTEM$ differs from the existing economic dispatch mechanism which minimize costs under the assumption that the demand during all the $K$ periods in the economic dispatch interval is equal to the demand ${\mathbf{d}}_1$ in period $1$.

Let $\lambda_s$ and $(\ubar{\boldsymbol\mu}_s,\bar{\boldsymbol\mu}_s)$ be the Lagrange multipliers associated with constraints~\eqref{eq:supply-demand-constraint} and~\eqref{eq:line-flow-constraint} respectively in $SYSTEM$. Then, the function $\boldsymbol\pi:\mathbb{R}\times \mathbb{R}^{2L}_+ \rightarrow \mathbb{R}^N$, defined by:
\begin{equation}
\boldsymbol\pi (\lambda_s, \ubar{\boldsymbol\mu}_s, \bar{\boldsymbol\mu}_s)
:=
\lambda_s\mathbf{1} + \mathbf{H}^\top (\ubar{\boldsymbol\mu}_s - \bar{\boldsymbol\mu}_s),
\label{eq:price}
\end{equation}
gives the nodal prices in outcome $s \in S$.


\section{Architectural Decomposition}
\label{sec:decomposition}

Our main result is a decomposition of $SYSTEM$ into setpoint and recourse sub-problems. Importantly, our decomposition identifies a rigorous connection between the setpoint and recourse sub-problems that ensures that the combination solves $SYSTEM$. 
In particular, our decomposition divides $SYSTEM$ into sub-problems $ED$ and $FR$ defined by:
\begin{align*}
&\begin{array}{rl}
ED(\mathbf{d}_1):\quad \underset{{\mathbf{q}}^b,{\mathbf{q}}^p}{\mathrm{min}}\! & \displaystyle\sum_{n\in N} \left( K c_n^b(q_{n}^b) + K c_n^p(q_{n}^p) - \delta_n q_n^b\right)
\\
\mathrm{s.t.} & (\mathbf{q}^b,\mathbf{q}^p,\mathbf{0}) \in \Omega(\mathbf{d}_1),
\end{array}
\\
&\begin{array}{rl}
FR(\mathbf{q}^{b},\mathbf{q}^p,\mathbf{d}_s):\quad \underset{{\mathbf{r}}^p_s}{\mathrm{min}} & \displaystyle\sum_{n\in N} c_n^p(q_{n}^p+r_{s,n}^p)
\\
\mathrm{s.t.} & (\mathbf{q}^{b},\mathbf{q}^p,\mathbf{r}_{s}^p) \in \Omega(\mathbf{d}_s),
\end{array}
\end{align*}
where $\boldsymbol\delta\in\mathbb{R}^N$ is a constant. $ED({\mathbf{d}}_1)$ is implemented in time period $1$ and $FR(\mathbf{q}^{b},\mathbf{q}^p,\mathbf{d}_s)$ is implemented in subsequent time periods $\kappa(s)>1$.

We denote the first optimization problem by $ED$, since it optimizes only generation setpoints $(\mathbf{q}^b,\mathbf{q}^p)$ assuming constant demand ${\mathbf{d}}_1$ over the $K$ time periods, and hence it is on the same timescale as the existing economic dispatch mechanism. We denote the second optimization problem by $FR$, since it optimizes regulation generators' recourse production $\mathbf{r}_s^p$ in subsequent time periods, and hence it is on the same timescale as the existing frequency regulation mechanism.

\begin{definition}
We say that $SYSTEM$ can be optimally decomposed into $ED$-$FR$ if $(\mathbf{q}^{b},\mathbf{q}^{p},\mathbf{r}^p)$ is an optimal solution to $SYSTEM$ if and only if $\mathbf{r}_1^p=\mathbf{0}$, $(\mathbf{q}^{b},\mathbf{q}^{p})$ is an optimal solution to $ED(\mathbf{d}_1)$, and $\mathbf{r}_s^{p}$ is an optimal solution to $FR(\mathbf{q}^{b},\mathbf{q}^p,\mathbf{d}_s)$ for all $s\in S$.
\end{definition}

\begin{theorem}[Decomposition]
\label{thm:decomposition}
Let $\lambda_s$ and $(\ubar{\boldsymbol\mu}_s, \bar{\boldsymbol\mu}_s)$ be any Lagrange multipliers associated with constraints~\eqref{eq:supply-demand-constraint} and~\eqref{eq:line-flow-constraint} respectively in $SYSTEM$.
\begin{itemize}

\item[(a)] If $\boldsymbol\delta$ is the average, over all time periods, of the difference between the expected nodal prices in each period and that in period~$1$, that is, for each $n\in N$,
\begin{align}
\delta_n
=
\sum_{s\in S} p_s \left(\pi_n(\lambda_s,\ubar{\boldsymbol\mu}_s, \bar{\boldsymbol\mu}_s) - \pi_n(\lambda_1,\ubar{\boldsymbol\mu}_1,\bar{\boldsymbol\mu}_1)\right),
\label{eq:optimal-delta}
\end{align}
then $SYSTEM$ can be optimally decomposed into $ED$-$FR$.

\item[(b)] If $SYSTEM$ can be optimally decomposed into $ED$-$FR$, then for all $n$ such that $\ubar{q}_n^b < q_n^b < \bar{q}_n^b$ and $\ubar{q}_n^p < q_{1,n}^p < \bar{q}_n^p$,~\eqref{eq:optimal-delta} holds. 
\end{itemize}
\end{theorem}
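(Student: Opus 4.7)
The plan is to verify both parts by comparing the KKT systems of $SYSTEM$, $ED(\mathbf{d}_1)$, and $FR(\mathbf{q}^b,\mathbf{q}^p,\mathbf{d}_s)$. Since each cost $c_n^b$, $c_n^p$ is strictly convex and all constraints are affine, each problem is convex and, under feasibility, its KKT conditions are both necessary and sufficient for optimality; moreover, by construction of $\boldsymbol\pi$ in~\eqref{eq:price}, the nodal prices enter all three stationarity systems in the same algebraic way, which makes the comparison tractable.

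First, I would form the Lagrangian of $SYSTEM$ with box multipliers $(\ubar{\boldsymbol\zeta}^b,\bar{\boldsymbol\zeta}^b)$ attached to~\eqref{eq:baseload-constraint} and $(\ubar{\boldsymbol\zeta}^p_s,\bar{\boldsymbol\zeta}^p_s)$ attached to~\eqref{eq:peaker-constraint} in each outcome $s$. Using $\sum_{s\in S} p_s = K$, stationarity in $q_n^b$ reads
\[
K (c_n^b)'(q_n^b) + \bar{\zeta}_n^b - \ubar{\zeta}_n^b \;=\; \sum_{s\in S} p_s\,\pi_n(\lambda_s,\ubar{\boldsymbol\mu}_s,\bar{\boldsymbol\mu}_s),
\]
and stationarity in $r_{s,n}^p$ for $s\neq 1$ reads
\[
p_s (c_n^p)'(q_n^p+r_{s,n}^p) + \bar{\zeta}^p_{s,n} - \ubar{\zeta}^p_{s,n} \;=\; p_s\,\pi_n(\lambda_s,\ubar{\boldsymbol\mu}_s,\bar{\boldsymbol\mu}_s).
\]
Subtracting these (summed over $s\neq 1$) from the $q_n^p$ stationarity, and using $r_{1,n}^p = 0$ and $p_1 = 1$, the $q_n^p$ condition collapses to
\[
(c_n^p)'(q_n^p) + \bar{\zeta}^p_{1,n} - \ubar{\zeta}^p_{1,n} \;=\; \pi_n(\lambda_1,\ubar{\boldsymbol\mu}_1,\bar{\boldsymbol\mu}_1).
\]

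For part~(a), I would write the analogous KKT systems for $ED$ and $FR$ and construct their multipliers from those of $SYSTEM$. Specifically, take $(\lambda_1^{ED},\ubar{\boldsymbol\mu}_1^{ED},\bar{\boldsymbol\mu}_1^{ED}) := K(\lambda_1,\ubar{\boldsymbol\mu}_1,\bar{\boldsymbol\mu}_1)$, the $ED$ peaker box duals equal to $K\ubar{\boldsymbol\zeta}^p_1$, $K\bar{\boldsymbol\zeta}^p_1$, and the $ED$ baseload box duals unchanged at $\ubar{\boldsymbol\zeta}^b$, $\bar{\boldsymbol\zeta}^b$; and for each $s\neq 1$ take $(\lambda_s^{FR},\ubar{\boldsymbol\mu}_s^{FR},\bar{\boldsymbol\mu}_s^{FR}) := (\lambda_s,\ubar{\boldsymbol\mu}_s,\bar{\boldsymbol\mu}_s)$ with $FR$ peaker box duals equal to $\ubar{\boldsymbol\zeta}^p_s/p_s$, $\bar{\boldsymbol\zeta}^p_s/p_s$. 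With this identification, $FR$ stationarity in $r_{s,n}^p$ coincides with the $SYSTEM$ recourse equation, and $ED$ stationarity in $q_n^p$ coincides with the reduced $SYSTEM$ equation above after dividing by $K$. The $ED$ stationarity in $q_n^b$ becomes $K(c_n^b)'(q_n^b) - \delta_n + \bar{\zeta}_n^b - \ubar{\zeta}_n^b = K\pi_n(\lambda_1,\ubar{\boldsymbol\mu}_1,\bar{\boldsymbol\mu}_1)$, and comparing with the $SYSTEM$ $q_n^b$ condition shows equivalence precisely when $\delta$ satisfies~\eqref{eq:optimal-delta}. Primal feasibility is inherited outcome by outcome, and complementary slackness and nonnegativity of the box duals transfer through the positive rescalings $K$ and $1/p_s$. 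Running the identification backwards assembles any compatible pair of $ED$ and $FR$ KKT tuples into a $SYSTEM$ KKT tuple, establishing both directions of optimal decomposition.

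For part~(b), fix an optimal $SYSTEM$ solution at which $q_n^b$ and $q_n^p$ are interior to their box constraints. Complementary slackness forces $\bar{\zeta}_n^b - \ubar{\zeta}_n^b = 0$ and $\bar{\zeta}^p_{1,n} - \ubar{\zeta}^p_{1,n} = 0$ in $SYSTEM$, and by the hypothesized decomposition the same interiority holds at the induced optimal $ED$ solution, so its corresponding box-dual differences also vanish. Comparing the cleaned-up $q_n^p$ stationarities of $SYSTEM$ and $ED$ forces $\pi_n(\lambda_1^{ED},\ubar{\boldsymbol\mu}_1^{ED},\bar{\boldsymbol\mu}_1^{ED}) = K\pi_n(\lambda_1,\ubar{\boldsymbol\mu}_1,\bar{\boldsymbol\mu}_1)$, and substituting this into the $ED$ $q_n^b$ stationarity together with the $SYSTEM$ $q_n^b$ stationarity yields~\eqref{eq:optimal-delta}. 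The main obstacle I anticipate is the careful bookkeeping of the outcome-indexed box multipliers: $\mathbf{q}^p$ enters every outcome's peaker constraint in $SYSTEM$, so its stationarity aggregates multipliers across all $s$, and only after subtracting the $s\neq 1$ recourse stationarities does the residual cleanly reduce to the period-$1$ condition that $ED$ mirrors. This reduction is exactly what makes the factor-$K$ rescaling of period-$1$ $SYSTEM$ multipliers compatible with $ED$ and, through~\eqref{eq:optimal-delta}, pins down the required adjustment $\delta_n$.
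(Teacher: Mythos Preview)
Your approach is essentially the paper's: both prove the theorem by comparing the KKT systems of $SYSTEM$, $ED$, and $FR$, and your multiplier identifications (scale the period-$1$ network duals by $K$ to obtain $ED$ duals; scale the outcome-$s$ peaker box duals by $1/p_s$ to obtain $FR$ duals) coincide with the paper's construction. The paper first changes variables to $\mathbf{q}_s^p := \mathbf{q}^p + \mathbf{r}_s^p$, which decouples the peaker stationarity across outcomes and eliminates the subtraction step you carry out; your route through the original variables is equivalent but a little heavier on bookkeeping. Your outline of part~(b) also matches the paper's argument.

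There is, however, a real gap in the reverse implication of part~(a). You claim that ``running the identification backwards'' assembles any $ED$ and $FR$ KKT tuples into a $SYSTEM$ KKT tuple. But the $SYSTEM$ stationarity in $q_n^b$, written in the multipliers you would construct from $ED$ and $FR$, requires
\[
\delta_n \;=\; \sum_{s\in S} p_s\Bigl(\pi_n(\lambda_s^{FR},\ubar{\boldsymbol\mu}_s^{FR},\bar{\boldsymbol\mu}_s^{FR}) - \pi_n(\lambda_1^{ED}/K,\ubar{\boldsymbol\mu}_1^{ED}/K,\bar{\boldsymbol\mu}_1^{ED}/K)\Bigr),
\]
i.e.\ formula~\eqref{eq:optimal-delta} evaluated at the $ED$/$FR$-derived multipliers. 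The hypothesis of~(a), however, only asserts~\eqref{eq:optimal-delta} for the given $SYSTEM$ multipliers $\lambda_s,\ubar{\boldsymbol\mu}_s,\bar{\boldsymbol\mu}_s$, and since dual multipliers (and hence nodal prices) in these linearly constrained problems need not be unique, you cannot conclude the two instances of~\eqref{eq:optimal-delta} agree. The paper sidesteps this entirely: it observes that $SYSTEM$, $ED$, and each feasible $FR$ have strictly convex objectives, hence \emph{unique} primal optima; the forward implication together with primal uniqueness then yields the reverse implication for free. Inserting this uniqueness argument repairs your proof.
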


The proof of Theorem~\ref{thm:decomposition} is given in the Appendix. The result follows from analyzing the Karush-Kuhn-Tucker conditions of the system operator's problem and those of $ED$ and $FR$.
As mentioned, we denote the two sub-problems by $ED$ and $FR$ because they focus on the economic dispatch and frequency regulation timescales respectively. Hence, these sub-problems can serve as guides for the optimal design of economic dispatch and frequency regulation mechanisms. The insights are immediate in the case of economic dispatch and we show how $ED$ leads to an improved market mechanism in Section~\ref{sec:market}. However, the insights may not be as clear in the case of frequency regulation. We show in Section~\ref{sec:frequency} that $FR$ can in fact be solved via distributed frequency control algorithms, although these algorithms deviate from current practice that do not optimize generation costs.

The most important feature of Theorem~\ref{thm:decomposition} is that, one way to choose generation setpoints optimally at the economic dispatch timescale, is to include, in the optimization objective, an offset of the dispatch generators' marginal costs by the expected changes in nodal prices during the frequency regulation timescale. The latter can be interpreted as the expected changes in the marginal value of dispatch generation. Hence, if the latter is zero, then generation setpoints can be chosen optimally at the economic dispatch timescale without regard to the behavior of the system in the frequency regulation timescale~\cite{CaiMalladaWierman2015a}.

An important extension of this result is to understand the efficiency loss of the decomposition when we are unable to estimate the RHS of \eqref{eq:optimal-delta} accurately. Note that negative estimation errors cause $ED(\mathbf{d}_1)$ to use less than optimal dispatch resources (and more than optimal regulation resources) and vice versa. We investigate the efficiency loss in Section~\ref{sec:simulation}. In such situations, the dispatch generation $\mathbf{q}^b$ might not be optimal, and therefore $FR(\mathbf{q}^b,\mathbf{q}^p,\mathbf{d}_s)$ might not be feasible. To ensure that $FR(\mathbf{q}^b,\mathbf{q}^p,\mathbf{d}^s)$ is feasible, we may modify $ED(\mathbf{d}_1)$ into a robust optimization problem by adding constraints $(\mathbf{q}^b,\mathbf{q}^p,\mathbf{r}_s^p)\in\Omega(\mathbf{d}_s)$ for all $s\in S\setminus\{1\}$. The size of such a problem is exponential in $S$ but can be reduced using the technique in~\cite{Minoux2011}. Note that this should not be viewed as a drawback of our decomposition, as the current practice
based on AGC might also not be feasible.
In practice, the risks of infeasibility are mitigated using reserves. Moreover, our decomposition has the advantage that it coordinates the economic dispatch and frequency regulation resources efficiently, and hence, may reduce reserve requirements.

Theorem~\ref{thm:decomposition} is close in spirit to work in communication networks that use optimization decomposition to justify and optimize protocol layering~\cite{chiang2007layering,palomar2006tutorial,cai2012optimal}. Hence, \emph{Theorem~\ref{thm:decomposition} provides a rigorous way to think about architectural design of power networks.}


\section{Distributed Frequency Regulation}
\label{sec:frequency}

 This section illustrates how to implement the solution to $FR$ using distributed frequency regulation controllers. 
Besides achieving optimality, a practical implementation should 
preserve network stability, be robust to unexpected system events, aggregate network information in a distributed manner, and satisfy constraints~\eqref{eq:peaker-constraint}, \eqref{eq:supply-demand-constraint} and~\eqref{eq:line-flow-constraint}. The distributed algorithm that we provide in this section satisfies all the above characteristics. It can be interpreted as performing distributed frequency regulation by sending different regulation signals to each bus.


\subsection{Dynamic model}
\label{ssec:dynamic-model}

Before introducing our algorithm we add dynamics to our system model to describe the system behavior within a single time period. Let $t$ denote the time evolution within the time period of outcome $s$, and assume without loss of generality that $t\in(k,k+1]$ where $k=\kappa(s)$. Let ${\mathbf{r}}_{s}^p(t) := (r_{s,n}^p(t), n\in N)$ denote the recourse quantities generated by the regulation generators at time $t$. We assume that dispatch generation and demand do not change within the time period.

Then, the system changes within the time period are governed by the swing equations which we assume to be:
\begin{subequations}
\label{eq:baseload-dynamics}
\begin{align}
\dot {\boldsymbol\theta}_s(t) =\; & {\boldsymbol\omega}_s(t);\\
\mathbf{M}\dot{\boldsymbol\omega}_s(t) =\; & \mathbf{q}^b+ \mathbf{q}^p + \mathbf{r}_s^p(t) -\mathbf{d}_{s}  - \mathbf{D}{\boldsymbol\omega}_s(t) - \mathbf{L} {\boldsymbol\theta}_s(t),
\end{align}
\end{subequations}
where ${\boldsymbol\omega}_s(t):=(\omega_{s,n}(t), n\in N)$ are the frequency deviations from the nominal value at time $t$, $\boldsymbol\theta_s(t) := (\theta_{s,n}(t), n\in N)$ are the phase angles at time $t$, ${\mathbf{M}} := \text{diag}(M_1,\ldots,M_N)$ where $M_n$ is the aggregate inertia of the generators at node $n$, and ${\mathbf{D}} := \text{diag}(D_1,\ldots,D_N)$ where $D_n$ is the aggregate damping of the generators at node $n$. The notation $\dot x$ denotes the time derivative, i.e. $\dot x = dx/dt$.
Equation~\eqref{eq:baseload-dynamics} is a linearized version of the nonlinear network dynamics~\cite{BergenHill1981,BergenVittal2000}, and has been widely used in the design of frequency regulation controllers. See, e.g., \cite{deMello:1973ea,deMello:1973jy}.

\subsection{Distributed frequency regulation}
\label{ssec:distr-FR}


We now introduce a distributed, continuous-time algorithm that provably solves $FR$ \emph{while preserving system stability}.
Our solution is based on a novel reverse and forward engineering approach for distributed control design in power systems~\cite{ZhaoTopcuLiLow2014,zhao2015acc,2014arXiv1410.2931M,LiChenZhao2014,YouChen2014ReverseAndForward,Jokic2009}.
The algorithm operates as follows. Each regulation generator $n$ updates its power generation using
\begin{equation}\label{eq:control-signal}
r_{s,n}^p(t) = [c_n^{p\prime -1}(-\omega_{s,n}(t) - \pi_{s,n}^p(t))]_{\ubar{q}_n^p-q_n^p}^{\bar q_n^p-q_n^p},
\end{equation}
where $c_n^{p\prime}(x)=\frac{\partial}{\partial x}c_n^p(x)$ and $c_n^{p \prime -1}$ denotes its inverse. The projection $[r]_{\ubar{q}_n^p-q_n^p}^{\bar q_n^p-q_n^p}$ ensures that ${\ubar{q}_n^p}-q_n^p \leq r \leq \bar q_n^p-q_n^p$ (or equivalently ${\ubar{q}_n^p} \leq r+q_n^p \leq \bar q_n^p$) and $\pi_{s,n}^p(t)$ is a control signal generated using:
\begin{subequations}\label{eq:distr-freq-regulation}
\begin{align}
\hspace{-9pt}DFR: \; \dot{\boldsymbol\pi}_s^p(t)
&=
\displaystyle\boldsymbol\zeta^\pi \left( {\mathbf{q}}^{b}+{\mathbf{q}}^{p} + {\mathbf{r}}_s^p(t)-{\mathbf{d}}_{s}   - \boldsymbol L\boldsymbol\phi_s(t) \right); \label{eq:distr-freq-regulation-a}
\\[2pt]
\dot{\bar{\boldsymbol\mu}}_s(t)
&=
\displaystyle\boldsymbol\zeta^{\bar{\mu}}\big[{\mathbf{BC}}^\top\boldsymbol\phi_s(t) - {\mathbf{f}}\big]^+_{\bar{\boldsymbol\mu}_s}; \label{eq:distr-freq-regulation-b}
\\
\ubar{\dot{\boldsymbol\mu}}_s(t)
&=
\displaystyle\boldsymbol\zeta^{\ubar{\mu}}\big[- {\mathbf{f}} -{\mathbf{BC}}^\top\boldsymbol\phi_s(t) \big]^+_{\ubar{\boldsymbol\mu}_s}; \label{eq:distr-freq-regulation-c}
\\
\dot{\boldsymbol\phi}_s(t)
&=
\boldsymbol\chi^\phi\left({\mathbf{L}}\boldsymbol\pi_s^p(t) - {\mathbf{CB}}(\bar{\boldsymbol\mu}_s(t) -  \ubar{\boldsymbol\mu}_s(t))\right), \label{eq:distr-freq-regulation-d}
\end{align}
\end{subequations}
where $\boldsymbol\zeta^\pi:=\text{diag}(\zeta^\pi_1,\dots,\zeta^\pi_N)$, $\boldsymbol\zeta^{\bar\mu}:=\text{diag}(\zeta^{\bar{\mu}}_1,\dots,\zeta^{\bar{\mu}}_L)$, $\boldsymbol\zeta^{\ubar\mu}:=\text{diag}(\zeta^{\ubar{\mu}}_1,\dots,\zeta^{\ubar{\mu}}_L)$, $\boldsymbol\chi^\phi:=\text{diag}(\chi^\phi_1,\dots,\chi^\phi_N)$ denote the respective control gains. The element-wise projection $[\mathbf{y}]^+_{\mathbf{x}}:=([y_n]^+_{x_n}, n\in N)$ ensures that the dynamics $\dot{\mathbf{x}}= [\mathbf{y}]^+_{\mathbf{x}}$ have a solution $\mathbf{x}(t)$ that remains in the positive orthant, that is, $[y_n]^+_{x_n}=0$ if $x_n=0$ and $y_n<0$, and $[y_n]^+_{x_n}=y_n$ otherwise.

The proposed solution \eqref{eq:control-signal} -- \eqref{eq:distr-freq-regulation} can be interpreted as a frequency regulation algorithm in which each regulation generator receives a different regulation signal \eqref{eq:control-signal} depending on its location in the network. The key step in the design of $DFR$ is reformulating $FR$ into the following equivalent optimization problem:
\begin{subequations}
\begin{align}
FR'(\boldsymbol q^b,\boldsymbol q^p, \boldsymbol d_s):\nonumber\\
\quad \underset{{\mathbf{r}}_s^p,{\boldsymbol \omega}_s,{\mathbf{v}}_s,{\boldsymbol \phi}_s}{\mathrm{min}} \quad& \sum_{n\in N} \left(c_n^p(q_{n}^p+r_{s,n}^p) + D_n{\omega_{s,n}^2}/{2}\right)
\nonumber \\
\mathrm{s.t.} \qquad\;
& \mathbf{q}^{b}+\mathbf{q}^{p} + \mathbf{r}_s^p - \mathbf{d}_s - \mathbf{D}\boldsymbol\omega_s = \mathbf{C} \mathbf{v}_s; \label{eq:FR'_k-b}
\\
&  {\mathbf{q}}^{b} +\mathbf{q}^{p}+ {\mathbf{r}}_s^p - {\mathbf{d}}_s = {\mathbf{L}} \boldsymbol\phi_s; \label{eq:FR'_k-c}
\\
 &-{\mathbf{f}} \leq {\mathbf{BC}}^\top \boldsymbol\phi_s \leq {\mathbf{f}};\label{eq:FR'_k-d}
\\
& \ubar{\mathbf{q}}^p \leq {\mathbf{q}}^p \leq \bar{\mathbf{q}}^p. \label{eq:FR'_k-e}
\end{align}
\end{subequations}
Recall from Section~\ref{sec:model-network-constraints} that $\mathbf{v}_s$ denote line flows. Constraint \eqref{eq:FR'_k-b} is reformulated from the per node supply-demand balance constraint~\eqref{eq:supply-demand-constraint-2}, and makes explicit the fact that, whenever supply and demand do not match, the mismatch is compensated by a change in the frequency. Constraint \eqref{eq:FR'_k-c} ensures that $\boldsymbol\omega_s=0$ at the optimal solution so that supply and demand are balanced. Constraint \eqref{eq:FR'_k-d} imposes line flow limits. However, instead of using actual line flows ${\mathbf{v}}_s$, these limits are imposed on {\textit{virtual flows}} ${\mathbf{BC}}^\top \boldsymbol\phi_s$, which are identical to line flows at the optimal solution~\cite{2014arXiv1410.2931M}.

It can be shown that $FR^\prime$ has a primal-dual algorithm that contains the component~\eqref{eq:baseload-dynamics} resembling power network dynamics and the components~\eqref{eq:control-signal}~--~\eqref{eq:distr-freq-regulation} that can be implemented via distributed communication and computation. This new problem $FR'$ also makes explicit the role of frequency in maintaining supply-demand balance.

The next proposition formally relates the optimal solutions of $FR$ and $FR'$ and guarantees the optimality of \eqref{eq:control-signal}~--~\eqref{eq:distr-freq-regulation}.

\begin{proposition}[Optimality]\label{prop:optimality}
Let ${\mathbf{r}}^{p}_{s}$ and $({{\mathbf{r}}_s^{p\prime}},\boldsymbol\omega_s',\mathbf{v}_s',\boldsymbol\phi_s')$ be optimal solutions of $FR$ and $FR'$ respectively. Then, the following statements are true: (i) {\textit{Frequency restoration:}} $\boldsymbol\omega_s'=0$; (ii) {\textit{Generation equivalence:}} ${\mathbf{r}}^{p}_{s}={{\mathbf{r}}^{p\prime}_s}$; (iii) {\textit{Line flow equivalence:}}  $ {\mathbf{H}} \left( {\mathbf{q}}^{b} + {\mathbf{q}}^{p}+{\mathbf{r}}_s^p - {\mathbf{d}}_{s} \right) = {\mathbf{BC}}^\top\boldsymbol\phi_s'$. Moreover,
there exists ${\boldsymbol \theta}_s' \in \mathbb{R}^N$ and ${\mathbf{y}}_s' \in \mathbb{R}^L$, satisfying ${\mathbf{Cy}}_s'=0$, such that
${\mathbf{v}}_s' = {\mathbf{BC}}^\top\boldsymbol\theta_s' +{\mathbf{y}}_s'$ and ${\mathbf{BC}}^\top\boldsymbol\phi_s' ={\mathbf{BC}}^\top\boldsymbol\theta_s'$.
And $({{\mathbf{r}}_s^{p\prime}},\boldsymbol\omega_s',\boldsymbol\theta_s',\boldsymbol\phi_s',\boldsymbol\pi_s^{p\prime},\ubar{\boldsymbol\mu}_s',\bar{\boldsymbol\mu}_s')$ is an equilibrium point of \eqref{eq:baseload-dynamics} -- \eqref{eq:distr-freq-regulation} if and only if
$({{\mathbf{r}}_s^{p\prime}},\boldsymbol\omega_s',{\mathbf{v}}_s',\boldsymbol\phi_s',\boldsymbol\pi_s^{p\prime},\ubar{\boldsymbol\mu}_s',\bar{\boldsymbol\mu}_s')$  is a primal-dual optimal solution of $FR'$, where $\boldsymbol \omega_s'$, $\boldsymbol\pi_s^{p\prime}$, and $(\ubar{\boldsymbol\mu}_s',\bar{\boldsymbol\mu}_s')$ are the Lagrange multipliers associated with constraints~\eqref{eq:FR'_k-b}, \eqref{eq:FR'_k-c}, and~\eqref{eq:FR'_k-d}, respectively.
\end{proposition}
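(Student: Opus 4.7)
The plan is to prove (i)--(iii) by exhibiting a tight correspondence between the feasible sets and objectives of $FR$ and $FR'$, and then to establish the equilibrium characterization by identifying the KKT system of $FR'$ with the zero set of the dynamics \eqref{eq:baseload-dynamics}--\eqref{eq:distr-freq-regulation}.

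For (i)--(iii), the key computation is that whenever $\mathbf{L}\boldsymbol\phi_s = \mathbf{q}^b+\mathbf{q}^p+\mathbf{r}_s^p-\mathbf{d}_s$, one has $\mathbf{B}\mathbf{C}^\top\boldsymbol\phi_s = \mathbf{H}(\mathbf{q}^b+\mathbf{q}^p+\mathbf{r}_s^p-\mathbf{d}_s)$, because $\mathbf{B}\mathbf{C}^\top\mathbf{L}^\dagger\mathbf{L}=\mathbf{B}\mathbf{C}^\top(\mathbf{I}-\tfrac{1}{N}\mathbf{1}\mathbf{1}^\top)=\mathbf{B}\mathbf{C}^\top$ by virtue of $\mathbf{C}^\top\mathbf{1}=\mathbf{0}$. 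Any $FR$-feasible $\mathbf{r}_s^p$ therefore lifts to an $FR'$-feasible tuple with $\boldsymbol\omega_s=\mathbf{0}$ by setting $\boldsymbol\phi_s=\mathbf{L}^\dagger(\mathbf{q}^b+\mathbf{q}^p+\mathbf{r}_s^p-\mathbf{d}_s)$ and $\mathbf{v}_s=\mathbf{B}\mathbf{C}^\top\boldsymbol\phi_s$, and conversely any $FR'$-feasible tuple yields an $FR$-feasible $\mathbf{r}_s^p$ after applying $\mathbf{1}^\top$ to \eqref{eq:FR'_k-b} and invoking the identity above. Since $D_n>0$, the $\omega$-penalty in the objective is strictly minimized at $\boldsymbol\omega_s=\mathbf{0}$ (which is feasible by the lifting), so every $FR'$ optimum satisfies $\boldsymbol\omega_s'=\mathbf{0}$, giving (i); generation equivalence (ii) then follows from strict convexity of $c_n^p$, and line-flow equivalence (iii) from the identity above. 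For the decomposition of $\mathbf{v}_s'$, observe that $\mathbf{C}\mathbf{v}_s'=\mathbf{L}\boldsymbol\phi_s'=\mathbf{C}\mathbf{B}\mathbf{C}^\top\boldsymbol\phi_s'$, so $\mathbf{y}_s':=\mathbf{v}_s'-\mathbf{B}\mathbf{C}^\top\boldsymbol\phi_s'\in\ker\mathbf{C}$, and setting $\boldsymbol\theta_s':=\boldsymbol\phi_s'$ yields the stated form.

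For the equilibrium statement, I would form the Lagrangian of $FR'$ with multipliers $\boldsymbol\lambda_s$, $\boldsymbol\pi_s^p$, and $(\bar{\boldsymbol\mu}_s,\ubar{\boldsymbol\mu}_s)$ for constraints \eqref{eq:FR'_k-b}--\eqref{eq:FR'_k-d}. Stationarity with respect to the primal $\boldsymbol\omega_s$ gives $D_n\omega_{s,n}=D_n\lambda_n$, so $\boldsymbol\omega_s=\boldsymbol\lambda_s$ at any stationary point; this justifies the overloaded notation and effectively turns the swing equation \eqref{eq:baseload-dynamics} into the primal-dual dynamics of $FR'$ for that pair. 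The remaining KKT conditions match the other dynamical rest conditions termwise: stationarity in $r_{s,n}^p$ with its box constraint gives \eqref{eq:control-signal}; stationarity in $\boldsymbol\phi_s$ gives the equilibrium of \eqref{eq:distr-freq-regulation-d}; primal feasibility of \eqref{eq:FR'_k-c} is the equilibrium of \eqref{eq:distr-freq-regulation-a}; and dual feasibility plus complementary slackness for \eqref{eq:FR'_k-d} correspond to the rest conditions of the projected dynamics \eqref{eq:distr-freq-regulation-b}--\eqref{eq:distr-freq-regulation-c}. Both directions of the iff then follow, using the decomposition $\mathbf{v}_s'=\mathbf{B}\mathbf{C}^\top\boldsymbol\theta_s'+\mathbf{y}_s'$ to translate between the $FR'$ line flow variable $\mathbf{v}_s$ and the phase angle $\boldsymbol\theta_s$ appearing in \eqref{eq:baseload-dynamics}; strict convexity of the costs and Slater's condition (from the feasibility assumption on $SYSTEM$) guarantee that the KKT conditions are both necessary and sufficient.

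The main delicate point is the dual role of $\boldsymbol\omega_s$ as both the primal frequency deviation and the Lagrange multiplier for \eqref{eq:FR'_k-b}, together with the fact that phase angles and line flows are only determined up to $\mathrm{span}(\mathbf{1})$ and $\ker\mathbf{C}$ respectively. The decomposition $\mathbf{v}_s'=\mathbf{B}\mathbf{C}^\top\boldsymbol\theta_s'+\mathbf{y}_s'$ is precisely the mechanism that reconciles the unconstrained flow variable in $FR'$ with the DC-law-constrained flow in the dynamics; once that reconciliation is in place, the remainder of the argument is a mechanical term-by-term match between KKT conditions and equilibrium equations.
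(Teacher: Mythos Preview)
Your proposal is correct and follows essentially the same approach as the paper's (sketch) proof: establish $\boldsymbol\omega_s'=0$, use it to identify the feasible sets of $FR$ and $FR'$, conclude (ii)--(iii) from strict convexity and the identity $\mathbf{B}\mathbf{C}^\top\mathbf{L}^\dagger\mathbf{L}=\mathbf{B}\mathbf{C}^\top$, and obtain the equilibrium characterization by matching KKT conditions of $FR'$ to the rest conditions of \eqref{eq:baseload-dynamics}--\eqref{eq:distr-freq-regulation}. The paper defers both (i) and the equilibrium statement to results in the cited reference, whereas you spell them out; your direct argument for (i) via the $\omega$-penalty is a slight variant of (and arguably cleaner than) the KKT route the paper invokes. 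One small slip: to conclude that an $FR'$-feasible $\mathbf{r}_s^p$ is $FR$-feasible you should apply $\mathbf{1}^\top$ to \eqref{eq:FR'_k-c} (which gives $\mathbf{1}^\top(\mathbf{q}^b+\mathbf{q}^p+\mathbf{r}_s^p-\mathbf{d}_s)=\mathbf{1}^\top\mathbf{L}\boldsymbol\phi_s=0$ directly), not to \eqref{eq:FR'_k-b}, which only yields balance after $\boldsymbol\omega_s=0$ is already known.
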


The proof of Proposition~\ref{prop:optimality} is given in the Appendix. What remains is to guarantee the convergence of the distributed frequency regulation algorithm.  


 \begin{proposition}[Convergence]
 \label{prop:convergence}
If $c_n^p$ is twice continuous differentiable with ${c_n^{p\prime\prime}}\geq\alpha>0$ (i.e., $\alpha$-strictly convex) and $c_n^p(q_{n}^p+r_{s,n}^p)\!\rightarrow\! +\infty$ as $q_{n}^p+r_{s,n}^p\!\rightarrow\!\{\ubar{q}_n^p,\bar{q}_n^p\}$, then $\mathbf{r}_s^p(t)$ in \eqref{eq:baseload-dynamics}~--~\eqref{eq:distr-freq-regulation} converge globally to an optimal solution of $FR$.
 \end{proposition}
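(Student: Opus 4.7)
The plan is to prove convergence by applying LaSalle's invariance principle to the closed-loop system \eqref{eq:baseload-dynamics}--\eqref{eq:distr-freq-regulation}, using a Lyapunov function built from the primal-dual gap of $FR'$. By Proposition~\ref{prop:optimality}, the set of equilibria of the closed-loop system coincides with the set of primal-dual optima of $FR'$, and any such optimum yields an optimal $\mathbf{r}_s^p$ for $FR$. So it suffices to show that trajectories approach the equilibrium set and that the generation component $\mathbf{r}_s^p(t)$ has a well-defined limit.

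\textbf{Step 1: Recast dynamics as projected primal-dual flow.} I would first verify that the swing equation \eqref{eq:baseload-dynamics} combined with the control law \eqref{eq:control-signal}--\eqref{eq:distr-freq-regulation} is precisely a (partial) primal-dual gradient flow for $FR'$. In particular, $\boldsymbol\omega_s$ plays the role of a dual variable for \eqref{eq:FR'_k-b} driven by $\mathbf{M}\dot{\boldsymbol\omega}_s = -(\partial L/\partial \boldsymbol\omega_s)$, the rule \eqref{eq:control-signal} is the exact primal minimization in $\mathbf{r}_s^p$ given $\boldsymbol\omega_s$ and $\boldsymbol\pi_s^p$ (using the barrier/coercivity condition $c_n^p\to\infty$ at the boundary so the projection is inactive at interior points), and \eqref{eq:distr-freq-regulation-a}--\eqref{eq:distr-freq-regulation-d} update the remaining primal/dual variables according to the gradient/projected-gradient of the Lagrangian of $FR'$. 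This recasting is standard from \cite{2014arXiv1410.2931M,ZhaoMalladaLowBialek2016}.

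\textbf{Step 2: Lyapunov candidate.} Fix any primal-dual optimum $z^\star:=({\mathbf{r}}_s^{p\prime},\boldsymbol\omega_s',\boldsymbol\phi_s',\boldsymbol\pi_s^{p\prime},\ubar{\boldsymbol\mu}_s',\bar{\boldsymbol\mu}_s')$ of $FR'$, which exists and is unique in $\mathbf{r}_s^p$ by $\alpha$-strict convexity. I would take the Lyapunov function
\begin{align*}
V(z) \;=\; \tfrac{1}{2}(\boldsymbol\omega_s-\boldsymbol\omega_s')^{\!\top}\mathbf{M}(\boldsymbol\omega_s-\boldsymbol\omega_s')
&+ \tfrac{1}{2}\|\boldsymbol\phi_s-\boldsymbol\phi_s'\|^2_{(\boldsymbol\chi^\phi)^{-1}}
+ \tfrac{1}{2}\|\boldsymbol\pi_s^p-\boldsymbol\pi_s^{p\prime}\|^2_{(\boldsymbol\zeta^\pi)^{-1}}\\
&+ \tfrac{1}{2}\|\bar{\boldsymbol\mu}_s-\bar{\boldsymbol\mu}_s'\|^2_{(\boldsymbol\zeta^{\bar\mu})^{-1}}
+ \tfrac{1}{2}\|\ubar{\boldsymbol\mu}_s-\ubar{\boldsymbol\mu}_s'\|^2_{(\boldsymbol\zeta^{\ubar\mu})^{-1}},
\end{align*}
the standard weighted quadratic storage function used in primal-dual analyses of power networks. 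The variable $\mathbf{r}_s^p$ is a static function of $(\boldsymbol\omega_s,\boldsymbol\pi_s^p)$ via \eqref{eq:control-signal}, so it need not appear explicitly.

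\textbf{Step 3: Dissipation inequality.} Differentiating $V$ along trajectories and using (i) the KKT conditions at $z^\star$, (ii) convexity of $c_n^p$ to bound cross terms $(\mathbf{r}_s^p-\mathbf{r}_s^{p\prime})^\top(c_n^{p\prime}(\mathbf{q}^p+\mathbf{r}_s^p)-c_n^{p\prime}(\mathbf{q}^p+\mathbf{r}_s^{p\prime}))\geq \alpha\|\mathbf{r}_s^p-\mathbf{r}_s^{p\prime}\|^2$, and (iii) the monotonicity of the projection operators $[\cdot]^+_{\bar{\boldsymbol\mu}_s},[\cdot]^+_{\ubar{\boldsymbol\mu}_s}$, the cross terms that couple primal and dual updates cancel in the standard saddle-point way and leave
$$\dot V \;\le\; -\,\boldsymbol\omega_s^{\!\top}\mathbf{D}\boldsymbol\omega_s \;-\; \alpha\,\|\mathbf{r}_s^p - \mathbf{r}_s^{p\prime}\|^2 \;\le\; 0.$$
This is where the $\alpha$-strict convexity of $c_n^p$ is essential. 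The coercivity assumption $c_n^p\to\infty$ at the bounds of $[\ubar q_n^p,\bar q_n^p]$ is used here as well, to ensure the projection in \eqref{eq:control-signal} is not active and the feedback map from $(\boldsymbol\omega_s,\boldsymbol\pi_s^p)$ to $\mathbf{r}_s^p$ is differentiable.

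\textbf{Step 4: LaSalle and conclusion.} The sublevel sets of $V$ are compact (each term is radially unbounded in its argument), so trajectories are bounded and $\omega$-limit sets are nonempty. By LaSalle's invariance principle, trajectories converge to the largest invariant subset of $\{\dot V=0\}=\{\boldsymbol\omega_s=0,\ \mathbf{r}_s^p=\mathbf{r}_s^{p\prime}\}$. On this set, substituting $\boldsymbol\omega_s\equiv 0$ and $\mathbf{r}_s^p\equiv \mathbf{r}_s^{p\prime}$ into \eqref{eq:baseload-dynamics}--\eqref{eq:distr-freq-regulation} forces the remaining variables $(\boldsymbol\phi_s,\boldsymbol\pi_s^p,\ubar{\boldsymbol\mu}_s,\bar{\boldsymbol\mu}_s)$ to sit at an equilibrium of the projected-gradient flow, i.e., a KKT point of $FR'$; by Proposition~\ref{prop:optimality} this corresponds to an optimum of $FR$. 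Since the optimum in $\mathbf{r}_s^p$ is unique by $\alpha$-strict convexity, $\mathbf{r}_s^p(t)\to\mathbf{r}_s^{p\prime}$ globally.

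\textbf{Main obstacle.} The chief technical difficulty is handling the three projection operators: $[\cdot]_{\ubar q_n^p-q_n^p}^{\bar q_n^p-q_n^p}$ in \eqref{eq:control-signal} and the nonsmooth $[\cdot]^+_{\ubar{\boldsymbol\mu}_s},[\cdot]^+_{\bar{\boldsymbol\mu}_s}$ in \eqref{eq:distr-freq-regulation-b}--\eqref{eq:distr-freq-regulation-c}. Existence and uniqueness of Carath\'eodory solutions, and the required monotonicity inequality $([y]^+_x - 0)(x-x^\star)\leq y(x-x^\star)$ whenever $x^\star\geq 0$, must be established so that these projection terms contribute nonpositively to $\dot V$; the coercivity of $c_n^p$ at the capacity bounds is what lets us avoid the analogous issue for \eqref{eq:control-signal} and treat $\mathbf{r}_s^p$ via a smooth feedback rather than as a constrained variable.
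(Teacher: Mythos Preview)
Your approach is essentially the same as the one the paper invokes by citation (the primal--dual Lyapunov/LaSalle argument from \cite{2014arXiv1410.2931M}, with the projection machinery of \cite{Cherukuri201610}), and the overall architecture of your sketch is sound. There is, however, one concrete gap: your Lyapunov function $V$ omits the phase-angle state $\boldsymbol\theta_s$ (equivalently, the primal variable $\mathbf{v}_s=\mathbf{BC}^\top\boldsymbol\theta_s$ of $FR'$), which is a dynamic state of the closed loop via $\dot{\boldsymbol\theta}_s=\boldsymbol\omega_s$ and enters the swing equation through $-\mathbf{L}\boldsymbol\theta_s$. Without it, the dissipation inequality in Step~3 does not hold: differentiating the term $(\boldsymbol\omega_s-\boldsymbol\omega_s')^\top\mathbf{M}(\boldsymbol\omega_s-\boldsymbol\omega_s')$ leaves an uncompensated cross term $-(\boldsymbol\omega_s-\boldsymbol\omega_s')^\top\mathbf{L}(\boldsymbol\theta_s-\boldsymbol\theta_s')$, which is not sign-definite. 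Likewise the compactness claim in Step~4 fails, since sublevel sets of your $V$ are unbounded in the $\boldsymbol\theta_s$-direction.

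The fix is exactly what the paper alludes to when it says ``substituting the line flows $\mathbf{v}_s(t)=\mathbf{BC}^\top\boldsymbol\theta_s(t)$ and eliminating $\boldsymbol\theta_s(t)$'': add the storage term $\tfrac{1}{2}(\boldsymbol\theta_s-\boldsymbol\theta_s')^\top\mathbf{L}(\boldsymbol\theta_s-\boldsymbol\theta_s')$ (equivalently $\tfrac{1}{2}\|\mathbf{v}_s-\mathbf{v}_s'\|^2_{\mathbf{B}^{-1}}$) to $V$. Its time derivative is $(\boldsymbol\theta_s-\boldsymbol\theta_s')^\top\mathbf{L}\boldsymbol\omega_s$, which cancels the offending cross term by symmetry of $\mathbf{L}$ (recall $\boldsymbol\omega_s'=0$). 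One then works modulo the null space $\spn\{\mathbf{1}\}$ of $\mathbf{L}$---the rigid-rotation invariance of the swing dynamics---to recover compactness of sublevel sets. With this amendment your Steps~3--4 go through, and by Proposition~\ref{prop:optimality} together with the uniqueness of the $FR'$ minimizer in $\mathbf{r}_s^p$ (from $\alpha$-strict convexity) the conclusion follows.
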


The proof of Proposition~\ref{prop:convergence} follows from \cite{2014arXiv1410.2931M} and uses the machinery developed in~\cite{Cherukuri201610} to handle projections~\eqref{eq:distr-freq-regulation-b}~--~\eqref{eq:distr-freq-regulation-c}. By substituting the line flows ${\mathbf{v}}_s(t) = {\mathbf{BC}}^\top\boldsymbol\theta_s(t)$ into~\eqref{eq:baseload-dynamics} and eliminating $\boldsymbol\theta_s(t)$, we can show that
 the entire system~\eqref{eq:baseload-dynamics}~--~\eqref{eq:distr-freq-regulation} is a primal-dual algorithm of $FR'$ (see \cite[Theorem 5]{2014arXiv1410.2931M}). Therefore, Theorem 10 in \cite{2014arXiv1410.2931M} guarantees global asymptotic convergence to an equilibrium point which by Proposition \ref{prop:optimality} is an optimal solution of both $FR'$ and $FR$. Our setup is simpler than the controllers in \cite{2014arXiv1410.2931M}, which had additional states, but the same proof technique applies. Although Proposition~\ref{prop:convergence} requires costs to blow up as regulation generations approach minimum and maximum capacities, this assumption is not restrictive, as it can be achieved by adding a barrier function to the actual cost before implementing in the controllers.

\section{Market Mechanism for Economic Dispatch}
\label{sec:market}

This section illustrates how to implement the solution to $ED$ through a market mechanism for economic dispatch. The mechanism works in the following manner. In the first time period, the ISO collects supply function bids from generators (both dispatch and regulation) and uses those bids to solve $ED$. Then, in subsequent time periods, the ISO uses the regulation generators' supply function bids to implement the controller in~\eqref{eq:control-signal}. This mechanism is efficient if $SYSTEM$ can be decomposed into $ED$-$FR$ and does not require any more communication than the existing market mechanisms used in practice.

\subsection{Market model}

We assume that generators are price-takers. Let $\pi_{n}^b$ denote the price paid to dispatch generator $n$ in each period and $\pi_{s,n}^p$ denote the price paid to regulation generator $n$ in outcome $s$. Then, the expected profit of the dispatch and regulation generators at node $n$ are:
\begin{align*}
&\mathrm{PF}^b_n(q_n^b, \pi_{n}^b)
:=
K\left(\pi_{n}^b q_n^b - c_n^b(q_n^b)\right),
\\
&\mathrm{PF}^p_n((q_n^p + r_{s,n}^p, \pi_{s,n}^p), s\in S)
\\
&:=
\displaystyle\sum_{s\in S} p_s \left( \pi_{s,n}^p \left(q_{n}^p + r_{s,n}^p\right) - c_n^p(q_{n}^p+r_{s,n}^p)\right).
\end{align*}
Note that the regulation generator's profit is a function of its total production $q_n^p + r_{s,n}^p$ in each outcome $s\in S$. The supply function bids indicate the quantities the generators are willing to produce at every price. We assume that these bids are chosen from a parameterized family of functions. In particular, for node $n$, we represent the dispatch and regulation generators' supply functions by parameters $\alpha_n^b > 0$ and $\alpha_n^p > 0$ respectively, and these bids indicate that the dispatch generator is willing to supply the quantity $q_n^b = [\alpha_n^b s_n^{b}(\pi_{n}^b)]_{\ubar{q}_n^b}^{\bar{q}_n^b}$ in the first time period and the regulation generator is willing to supply the quantity $q_{n}^p + r_{s,n}^p = [\alpha_n^p s_n^{p}(\pi_{s,n}^p)]_{\ubar{q}_n^p}^{\bar{q}_n^p}$ in outcome $s$, for some fixed functions $s_n^b:[\ubar{q}_n^b,\bar{q}_n^b]\rightarrow\mathbb{R}_+$ and $s_n^p:[\ubar{q}_n^p,\bar{q}_n^p]\rightarrow\mathbb{R}_+$.\footnote{Numerous studies have explored different functional forms of the supply functions and their impact on market efficiency, e.g., see~\cite{rudkevich2005supply,baldick2002electricity,johari2011parameterized,baldick2004theory,klemperer1989supply}. The focus of this work is on illustrating that $ED$ can be implemented using a simple market mechanism. Hence, we restrict ourselves to linearly parameterized supply functions and leave the analyses of other more sophisticated supply functions to future work. We refer the reader to~\cite{johari2011parameterized} for some appealing properties of linearly parameterized supply functions.}
We also assume that $s_n^b(\pi_n^b) \neq 0$ for all $\pi_n^b\in\mathbb{R}$ and $s_n^p(\pi_{s,n}^p) \neq 0$ for all $\pi_{s,n}^p \in\mathbb{R}$.\footnote{This assumption is a technical condition to avoid the degenerate situation where a generator's supply quantity is not sensitive to its bid parameter which would occur if $s_n^{b}(\pi_n^b) = 0$ or $s_n^{p}(\pi_{s,n}^p) = 0$.} 
The generators choose their bids to maximize their profits subject to their capacity constraints. Note that the regulation generator submits only one supply function for all possible outcomes. Hence, its bid in the economic dispatch timescale is also used as its bid in the frequency regulation timescale.

The system operator interprets bids $\alpha_n^b$ and $\alpha_n^p$ as signals that the dispatch and regulation generators at node $n$ have marginal costs $\pi_{n}^b$ and $\pi_{s,n}^p$ respectively when supplying quantities $\alpha_n^b s_n^{b}(\pi_{n}^b)$ and $\alpha_n^p s_n^{p}(\pi_{s,n}^p)$ respectively. Hence, it associates with the generators the following bid cost functions:
\begin{align}
\hat{c}_n^b(q_n^b)
&:=
\int_{\ubar{q}_n^b}^{q_n^b} (s_n^{b})^{-1}(w/\alpha_n^b)\, dw,
\label{eq:baseload-bid-cost}
\\
\hat{c}_n^p(q_n^p)
&:=
\int_{\ubar{q}_n^p}^{q_n^p} (s_n^{p})^{-1}(w/\alpha_n^p)\, dw.
\label{eq:peaker-bid-cost}
\end{align}
Let $\boldsymbol\alpha^b := (\alpha_n^b, n\in N)$ and $\boldsymbol\alpha^p := (\alpha_n^p, n\in N)$ denote the vectors of bids. Given bids $(\boldsymbol\alpha^b, \boldsymbol\alpha^p)$, the system operator solves $ED$ to minimize expected bid costs. The prices for the regulation generator in the first time period are the nodal prices in $ED$ while the prices for the dispatch generator are the nodal prices offset by $\boldsymbol\delta$. Then, in each subsequent outcome $s\in S$, the system operator implements the controller in~\eqref{eq:control-signal} using regulation generators' bid costs. The prices are the nodal prices in $FR$ (which are computed by $DFR$).

\subsection{Market equilibrium}

Our focus is on understanding the efficiency of the mechanism. Formally, we consider the following notion of a competitive equilibrium.
\begin{definition}
\label{def:equilibrium}
We say that bids $(\boldsymbol\alpha^b,\boldsymbol\alpha^p)$ are a competitive equilibrium if there exists prices $\boldsymbol\pi^b \in \mathbb{R}^N$ and $\boldsymbol\pi^p = (\boldsymbol\pi_s^p, s\in S) \in \mathbb{R}^{NS}$ such that:
\begin{itemize}
\item[(a)] For all $n$, $\alpha_n^b$ is an optimal solution to:
\begin{align*}
\begin{array}{rl}
\displaystyle\max_{\hat{\alpha}_n^b>0} & \mathrm{PF}^b_n\left( [\hat{\alpha}_n^b s_n^{b}(\pi_{n}^b)]_{\ubar{q}_n^b}^{\bar{q}_n^b}, \pi_{n}^b\right).
\end{array}
\end{align*}

\item[(b)] For all $n$, $\alpha_n^p$ is an optimal solution to:
\begin{align*}
\begin{array}{rl}
\displaystyle\max_{\hat{\alpha}_n^p>0} & \mathrm{PF}^p_n\left(([\hat{\alpha}_n^p s_n^{p}(\pi_{s,n}^p)]_{\ubar{q}_n^p}^{\bar{q}_n^p}, \pi_{s,n}^p), s\in S\right).
\end{array}
\end{align*}

\item[(c)] $\boldsymbol\pi^b = (1/K)\left(\boldsymbol\pi({\lambda}_1,\ubar{\boldsymbol\mu}_1,\bar{\boldsymbol\mu}_1) + \boldsymbol\delta\right)$ and $\boldsymbol\pi_1^p = (1/K)\boldsymbol\pi({\lambda}_1,\ubar{\boldsymbol\mu}_1,\bar{\boldsymbol\mu}_1)$ where ${\lambda}_1$ and $(\ubar{\boldsymbol\mu}_1,\bar{\boldsymbol\mu}_1)$ are the Lagrange multipliers associated with constraints~\eqref{eq:supply-demand-constraint} and~\eqref{eq:line-flow-constraint} respectively in:
\begin{align*}
\begin{array}{rl}
\hat{ED}(\mathbf{d}_1): \quad \displaystyle\min_{\mathbf{q}^b,\mathbf{q}^p} & \displaystyle\sum_{n\in N} \left(K\hat{c}_n^b(q_{n}^b) + K\hat{c}_n^p(q_{n}^p) - \delta_n q_n^b\right) \notag
\\
\mathrm{s.t.}  & (\mathbf{q}^b,\mathbf{q}^p,\mathbf{0}) \in \Omega(\mathbf{d}_1).
\end{array}
\end{align*}

\item[(d)] For all $s \in S$, $\boldsymbol\pi_s^p = \boldsymbol\pi(\lambda_s,\ubar{\boldsymbol\mu}_s,\bar{\boldsymbol\mu}_s)$ where $\lambda_s$ and $(\ubar{\boldsymbol\mu}_s,\bar{\boldsymbol\mu}_s)$ are the Lagrange multipliers associated with constraints~\eqref{eq:supply-demand-constraint} and~\eqref{eq:line-flow-constraint} respectively in:
\begin{align*}
\begin{array}{rl}
\hspace{-37pt}\hat{FR}(\mathbf{q}^b,\mathbf{q}^p,\mathbf{d}_s): \quad \displaystyle\min_{\mathbf{r}_s^p} & \displaystyle\sum_{n\in N} \hat{c}_n^p(q_{n}^p+r_{s,n}^p) \notag
\\
\mathrm{s.t.}  & (\mathbf{q}^b,\mathbf{q}^p,\mathbf{r}_s^p) \in \Omega(\mathbf{d}_s),
\end{array}
\end{align*}
where $\mathbf{q}^b=\left([\alpha_n^b s_n^{b}(\pi_{n}^b )]_{\ubar{q}_n^b}^{\bar{q}_n^b},n\in N\right)$ and $\mathbf{q}^p=\left([\alpha_n^p s_n^{p}(\pi_{1,n}^p )]_{\ubar{q}_n^p}^{\bar{q}_n^p},n\in N\right)$.
\end{itemize}
At each node $n\in N$, the dispatch and regulation generators produce at setpoints $[\alpha_n^b s_n^{b}(\pi_{n}^b)]_{\ubar{q}_n^b}^{\bar{q}_n^b}$ and $[\alpha_n^p s_n^{p}(\pi_{1,n}^p)]_{\ubar{q}_n^p}^{\bar{q}_n^p}$ respectively in period $1$, and the regulation generator produces an additional quantity $[\alpha_n^p s_n^p(\pi_{s,n}^p)]_{\ubar{q}_n^p}^{\bar{q}_n^p} - [\alpha_n^p s_n^{p}(\pi_{1,n}^p)]_{\ubar{q}_n^p}^{\bar{q}_n^p}$ in outcome $s\in S$.
\end{definition}

The following is our main result for this section. It highlights that, as a consequence of Theorem~\ref{thm:decomposition}, any competitive equilibrium is efficient.
\begin{proposition}[Efficiency]
\label{prop:market}
Suppose that, for each $n\in N$, the functions $s_n^b(\cdot) =  c_n^{b\prime -1}(\cdot)/\gamma_n^b$ and $s_n^p(\cdot) = c_n^{p\prime -1}(\cdot)/\gamma_n^p$ for some constants $\gamma_n^b,\gamma_n^p > 0$. Let $\lambda_s$ and $(\ubar{\boldsymbol\mu}_s, \bar{\boldsymbol\mu}_s)$ be the Lagrange multipliers associated with constraints~\eqref{eq:supply-demand-constraint} and~\eqref{eq:line-flow-constraint} respectively in $SYSTEM$. Suppose that \eqref{eq:optimal-delta} holds. Then:
\begin{itemize}
\item[(a)] Any competitive equilibrium has a production schedule that solves $SYSTEM$.
\item[(b)] Any production schedule that solves $SYSTEM$ can be sustained by a competitive equilibrium.
\end{itemize}
\end{proposition}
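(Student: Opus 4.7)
The plan is to leverage Theorem~\ref{thm:decomposition}: since \eqref{eq:optimal-delta} holds, $SYSTEM$ decomposes optimally into $ED$--$FR$, so part~(a) reduces to showing that any equilibrium production satisfies the KKT conditions of both $ED$ and $FR$, while part~(b) reduces to constructing bids and prices that turn an $SYSTEM$ optimum into a market equilibrium. As a preliminary I compute the bid cost derivatives: since $s_n^b(\cdot)=c_n^{b\prime -1}(\cdot)/\gamma_n^b$, its inverse is $(s_n^b)^{-1}(x)=c_n^{b\prime}(\gamma_n^b x)$, so differentiating \eqref{eq:baseload-bid-cost} yields $\hat{c}_n^{b\prime}(q)=c_n^{b\prime}(\gamma_n^b q/\alpha_n^b)$, and similarly $\hat{c}_n^{p\prime}(q)=c_n^{p\prime}(\gamma_n^p q/\alpha_n^p)$. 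Hence $\hat{c}_n^{b\prime}$ matches $c_n^{b\prime}$ (respectively $\hat{c}_n^{p\prime}$ matches $c_n^{p\prime}$) at the equilibrium production precisely when $\alpha_n^b=\gamma_n^b$ (resp.\ $\alpha_n^p=\gamma_n^p$).

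For part~(a), I take a competitive equilibrium with bids $(\boldsymbol\alpha^b,\boldsymbol\alpha^p)$ and prices $(\boldsymbol\pi^b,\boldsymbol\pi^p)$. Condition~(c) combined with stationarity of $\hat{ED}$ gives $\hat{c}_n^{b\prime}(q_n^b)=\pi_n^b$ at interior $q_n^b$, and condition~(d) combined with stationarity of $\hat{FR}$ gives $\hat{c}_n^{p\prime}(q_n^p+r_{s,n}^p)=\pi_{s,n}^p$ at interior points for every $s$. Dispatch generator profit maximization reduces to optimizing $\pi_n^b q-c_n^b(q)$ over $[\ubar{q}_n^b,\bar{q}_n^b]$, yielding $c_n^{b\prime}(q_n^b)=\pi_n^b$ at interior points; for regulation generators, the first-order condition with respect to $\alpha_n^p$, namely $\sum_s p_s\bigl(\pi_{s,n}^p-c_n^{p\prime}(q_n^p+r_{s,n}^p)\bigr)s_n^p(\pi_{s,n}^p)=0$, combined with the supply relation $\pi_{s,n}^p=c_n^{p\prime}(\gamma_n^p(q_n^p+r_{s,n}^p)/\alpha_n^p)$ and strict monotonicity of $c_n^{p\prime}$, forces $\alpha_n^p=\gamma_n^p$, and hence $c_n^{p\prime}(q_n^p+r_{s,n}^p)=\pi_{s,n}^p$ scenario by scenario. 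Thus the KKT stationarity conditions of $ED$ and $FR$ hold with the same dual variables as those of $\hat{ED}$ and $\hat{FR}$. At saturated generators the profit-maximization inequality (for instance $\pi_n^b\geq c_n^{b\prime}(\bar{q}_n^b)$ at an upper boundary) supplies the correct sign for the nonnegative capacity multiplier required by the KKT of $ED$, and analogously for $FR$. Applying Theorem~\ref{thm:decomposition}, the equilibrium production then solves $SYSTEM$.

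For part~(b), given an $SYSTEM$-optimal $(\mathbf{q}^b,\mathbf{q}^p,\mathbf{r}^p)$ with multipliers $(\lambda_s,\ubar{\boldsymbol\mu}_s,\bar{\boldsymbol\mu}_s)$, Theorem~\ref{thm:decomposition} identifies it as a joint $ED(\mathbf{d}_1)$--$FR(\mathbf{q}^b,\mathbf{q}^p,\mathbf{d}_s)$ solution with those same multipliers. I set $\alpha_n^b=\gamma_n^b$ and $\alpha_n^p=\gamma_n^p$; the bid cost derivatives then satisfy $\hat{c}_n^{b\prime}=c_n^{b\prime}$ and $\hat{c}_n^{p\prime}=c_n^{p\prime}$, so $\hat{ED}$ and $\hat{FR}$ share the same primal--dual solution as $ED$ and $FR$. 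With prices defined by~(c)--(d), the bid quantity $[\gamma_n^b s_n^b(\pi_n^b)]_{\ubar{q}_n^b}^{\bar{q}_n^b}=[c_n^{b\prime -1}(\pi_n^b)]_{\ubar{q}_n^b}^{\bar{q}_n^b}$ is precisely the unique maximizer of $\pi_n^b q-c_n^b(q)$ over $[\ubar{q}_n^b,\bar{q}_n^b]$, verifying condition~(a) of Definition~\ref{def:equilibrium}; a scenario-by-scenario analogue handles the regulation generators and verifies condition~(b). The main obstacle is extracting the per-scenario condition $c_n^{p\prime}(q_n^p+r_{s,n}^p)=\pi_{s,n}^p$ from the regulation generators' weighted-sum FOC in part~(a): a single bid $\alpha_n^p$ governs behavior across all scenarios, so the FOC is only an aggregate identity, and one must combine strict convexity of $c_n^p$ with positivity of $s_n^p$, plus careful treatment of saturated scenarios whose derivative contribution drops out, to conclude $\alpha_n^p=\gamma_n^p$.
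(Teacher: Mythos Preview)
Your overall plan—reduce via Theorem~\ref{thm:decomposition} to the KKT systems of $ED$ and $FR$, for (a) show that equilibrium productions satisfy the true-cost KKT, and for (b) exhibit the truthful bids $(\boldsymbol\gamma^b,\boldsymbol\gamma^p)$—matches the paper's architecture, and your treatment of part~(b) and of the dispatch generators in part~(a) is essentially the paper's Step~5 and the dispatch half of its Step~6.

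The weak spot is the regulation-generator argument in part~(a). You assert that the aggregate first-order condition in $\hat\alpha_n^p$ ``forces $\alpha_n^p=\gamma_n^p$,'' but this is not true in general: if every scenario has $c_n^{p\prime-1}(\pi_{s,n}^p)\notin(\ubar q_n^p,\bar q_n^p)$, then the projected supply $[\hat\alpha_n^p s_n^p(\pi_{s,n}^p)]_{\ubar q_n^p}^{\bar q_n^p}$ sits at a capacity bound in each scenario, the profit is locally flat in $\hat\alpha_n^p$, and an entire interval of bids is optimal. Your proposed patch (``positivity of $s_n^p$, plus careful treatment of saturated scenarios \ldots\ to conclude $\alpha_n^p=\gamma_n^p$'') still aims at the wrong target; only $s_n^p\neq 0$ is assumed, and in any case no argument can pin down $\alpha_n^p$ when every scenario is saturated. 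What the proof actually needs is not $\alpha_n^p=\gamma_n^p$ but that the \emph{production} $[\alpha_n^p s_n^p(\pi_{s,n}^p)]_{\ubar q_n^p}^{\bar q_n^p}$ coincides with the true-cost best response $[c_n^{p\prime-1}(\pi_{s,n}^p)]_{\ubar q_n^p}^{\bar q_n^p}$ in every scenario; from this one manufactures the nonnegative capacity multipliers for $FR$ directly, regardless of the bid value. The paper gets this (its Step~1) not through a first-order condition but by a pointwise profit-vector comparison: for any bid failing the per-scenario identity, the vector of per-scenario profits is weakly dominated, strictly in at least one coordinate, by a bid that satisfies it. That device handles the projection nondifferentiability and the all-saturated case uniformly. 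Once you have the per-scenario production identity, the remainder of your multiplier construction goes through as you outline.
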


Proposition~\ref{prop:market} resembles classical welfare theorems, e.g.,~\cite{johari2011parameterized,mas1995microeconomic,johari2009efficiency,wang2012dynamic}. However, it differs from typical competitive equilibria frameworks because each regulation generator is restricted to bidding a single supply function over the entire economic dispatch interval even though there are multiple fast timescale instances. The latter creates challenges in guaranteeing existence and efficiency of equilibria that do not arise in typical competitive equilibria frameworks. In particular, the space of bid functions needs to be sufficiently expressive for generators to convey their costs over multiple fast timescale instances via a single bid function. This is not an issue in market frameworks where separate bids are collected for separate market instances. Proposition~\ref{prop:market} circumvented this challenge by restricting supply functions to be in the linear space of regulation generators' true cost functions. An important extension is to understand the existence and efficiency of equilibria under less restrictive bid spaces. Proposition~\ref{prop:market} also highlights that nodal pricing is not always efficient and that the pricing mechanism needs to be jointly designed and analyzed with decomposition principles in order to achieve efficiency.

\section{Case Study}
\label{sec:simulation}

\begin{table*}[!t]
\caption{Generators on test system.}
\label{tbl:sim-gen-assignment}
\centering
\begin{tabular}{|c|c|c|c|c|}
\hline
Unit Group & Unit Type & Production & Marginal Cost & Assignment
\\
& & Range (MW) & Range ($\$$/MWh) &
\\ \hline
U12 & Oil/Steam & $[10,60]$ & $[58.14,64.446]$ & Dispatch
\\
U20 & Oil/CT & $[64,80]$ & $[130.0,130.0]$ & Regulation
\\
U50 & Hydro & $[60,300]$ & $[0.001,0.001]$ & Regulation
\\
U76 & Coal/Steam & $[60,304]$ & $[16.511,18.231]$ & Dispatch
\\
U100 & Oil/Steam & $[75,300]$ & $[46.295,54.196]$ & Dispatch
\\
U155 & Coal/Steam & $[216,620]$ & $[13.294,14.974]$ & Dispatch
\\
U197 & Oil/Steam & $[207,591]$ & $[49.57,51.405]$ & Dispatch
\\
U350 & Coal/Steam & $[140,350]$ & $[13.22,15.276]$ & Dispatch
\\
U400 & Nuclear & $[200,800]$ & $[4.466,4.594]$ & Dispatch
\\ \hline
\end{tabular}
\end{table*}

The efficiency of the mechanisms in Sections \ref{sec:frequency} and \ref{sec:market} depends on how accurately the system operator can predict the RHS of equation~\eqref{eq:optimal-delta}. In this section, we investigate the sensitivity of the performance of the decomposition to the value of $\boldsymbol\delta$ using a case study on the IEEE 24-bus reliability test system~\cite{WongAlbrechtAllanBillintonChenFongHaddadLiMukerjiPattonothers1999}.


Table~\ref{tbl:sim-gen-assignment} summarizes the properties of the generators on the system. We assume that the hydro and combustion turbine (CT) generators are regulation resources while all other generators are dispatch resources. Note that, the hydro resources, which generate between 60 to 300 MW, have the lowest marginal cost, while the CT resources, which generate between 64 to 80 MW, have the highest marginal cost.

We assume that there are $K=20$ time periods in the economic dispatch interval. Hence, each time period lasts 15 seconds. We construct the scenario tree as follows. Abusing notation, let $\mathbf{d}_{k}\in\mathbb{R}^N$ denote the demand at all nodes in period $k$. Set $\mathbf{d}_1$ to the values in the test system data and let
\begin{align*}
\mathbf{d}_{k}
=
\text{diag} \left(\mathbf{1} + \sum_{k'=1}^{k-1}\mathbf{w}_{k'} \right)\cdot \mathbf{d}_{1},
\end{align*}
where $\mathbf{w}_{k'}\sim \mathcal{N}(\mu_d\mathbf{1}, (0.002^2)\mathbf{I})$ is a Gaussian vector with mean $\mu_d\mathbf{1}$ and covariance $(0.002^2)\mathbf{I}$. We simulate $\mu_d = -0.0002, 0, +0.0002$ to model scenarios with increasing, constant, and decreasing demands, respectively. For each value of $\mu_d$, we generate 50 length-K samples of the random process and assign equal probabilities to all the samples. Hence, the scenario tree is a tall tree, where the root node has 50 children, and all other nodes either have one child or is a leaf node. \figurename~\ref{fig:demand-sample} shows the sample trajectories of total system demand. The RHS of equation~\eqref{eq:optimal-delta} have values $\delta_n^* = -80.34, 49.67, 49.72$ (in~\$/MWh) corresponding to $\mu_d = -0.0002, 0, +0.0002$, respectively. Note that the optimal $\delta_n$ is non-zero even when the demand evolution has zero mean. To study the impact when $\boldsymbol\delta$ deviates from $\boldsymbol\delta^*$, we consider
\begin{align*}
\boldsymbol\delta = \boldsymbol\delta^* + \boldsymbol\epsilon,
\end{align*}
where $\boldsymbol\epsilon\sim \mathcal{N}(\mu_\epsilon\mathbf{1}, \sigma_{\epsilon}^2\mathbf{I})$.  Hence, $\mu_\epsilon$ and $\sigma_{\epsilon}$ can be interpreted as the bias and standard deviation of the prediction errors. Given any specified $\mu_\epsilon$ and $\sigma_\epsilon$, we generate 50 samples of $\boldsymbol\delta$ and randomly match these samples to the 50 demand samples. \figurename~\ref{fig:inc-total-cost} shows the percentage increase in average total costs under the decomposition (compared to the optimal solution) for different values of $\mu_\epsilon$ and $\sigma_\epsilon$.

\begin{figure*}[!h]
\centering
\begin{minipage}[t]{\textwidth}
\centering
\subfloat[Increasing Demand]{\includegraphics[width=0.28\textwidth]{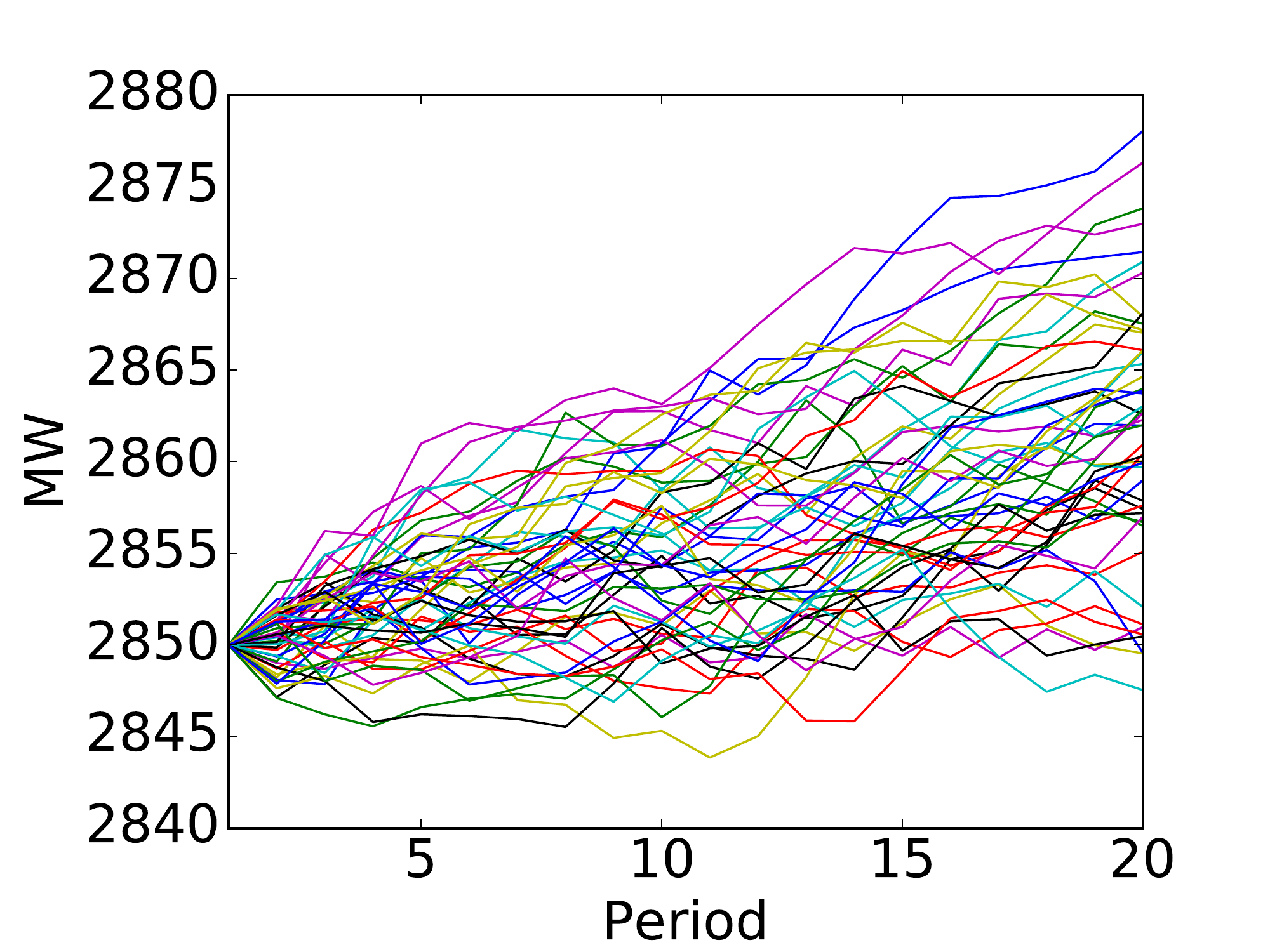}}
\subfloat[Constant Demand]{\includegraphics[width=0.28\textwidth]{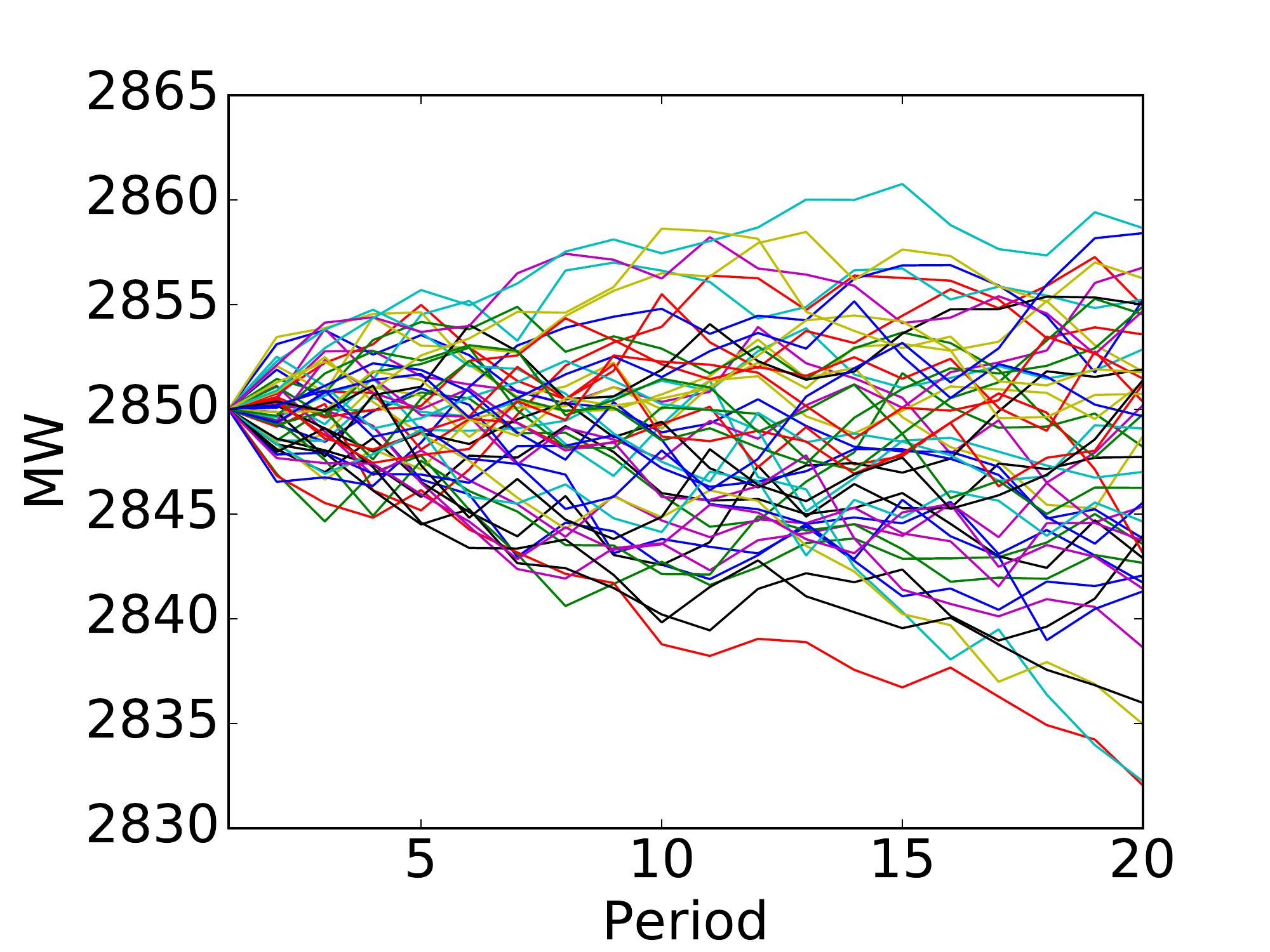}}
\subfloat[Decreasing Demand]{\includegraphics[width=0.28\textwidth]{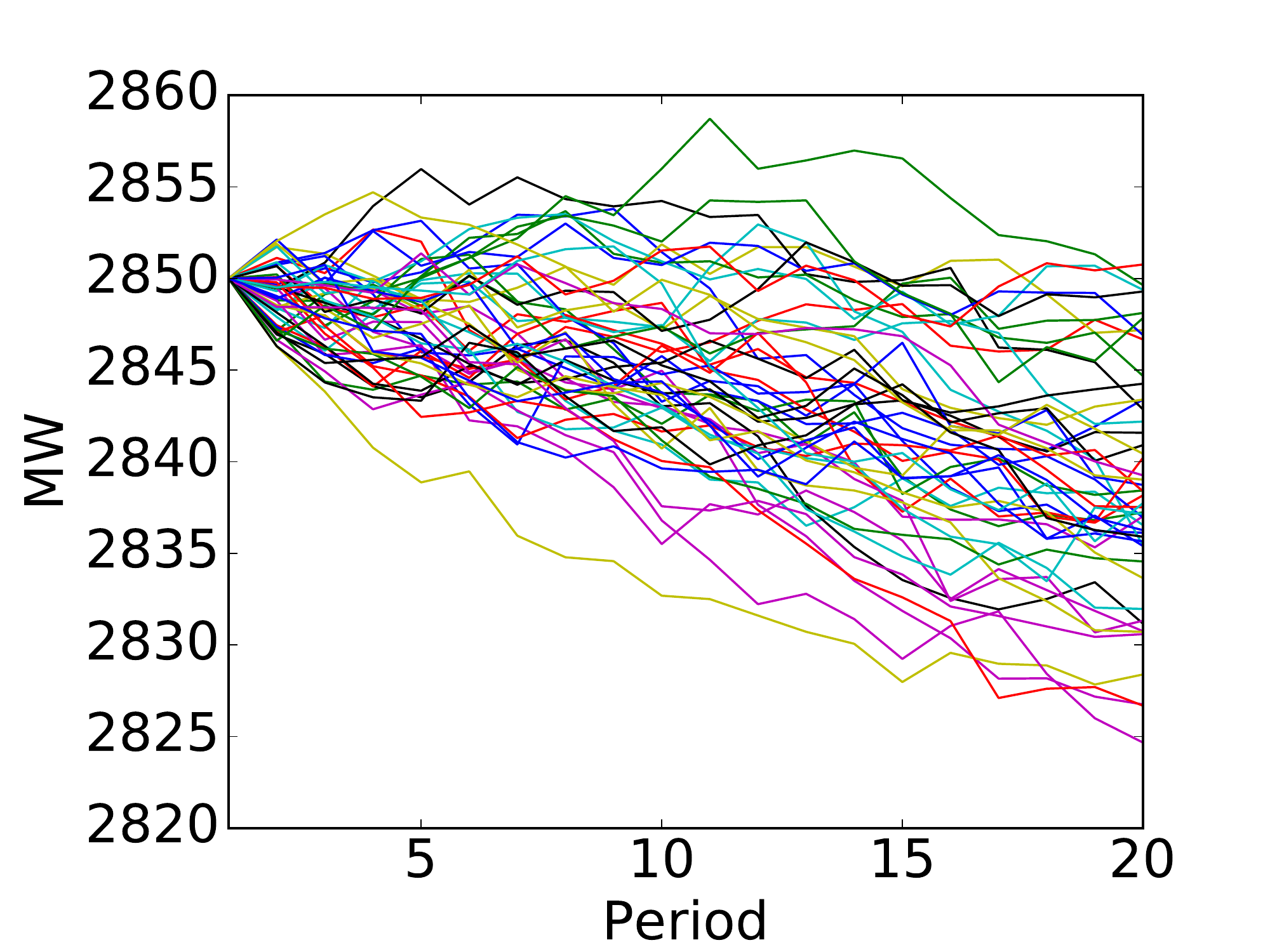}}
\caption{Samples paths of total demand.}
\label{fig:demand-sample}
\end{minipage}
\begin{minipage}[t]{\textwidth}
\centering
\subfloat[Increasing Demand]{\includegraphics[width=0.28\textwidth]{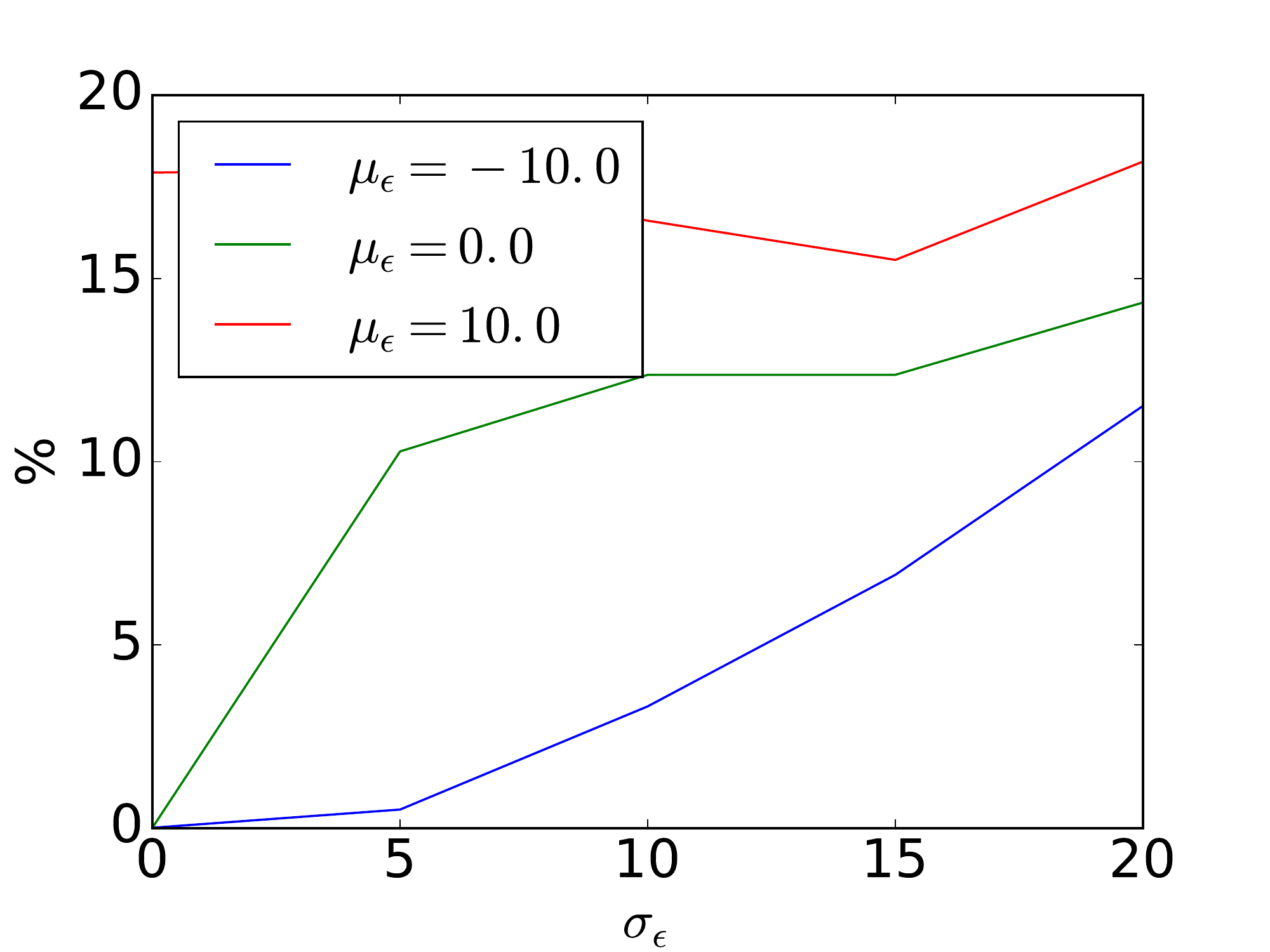}}
\subfloat[Constant Demand]{\includegraphics[width=0.28\textwidth]{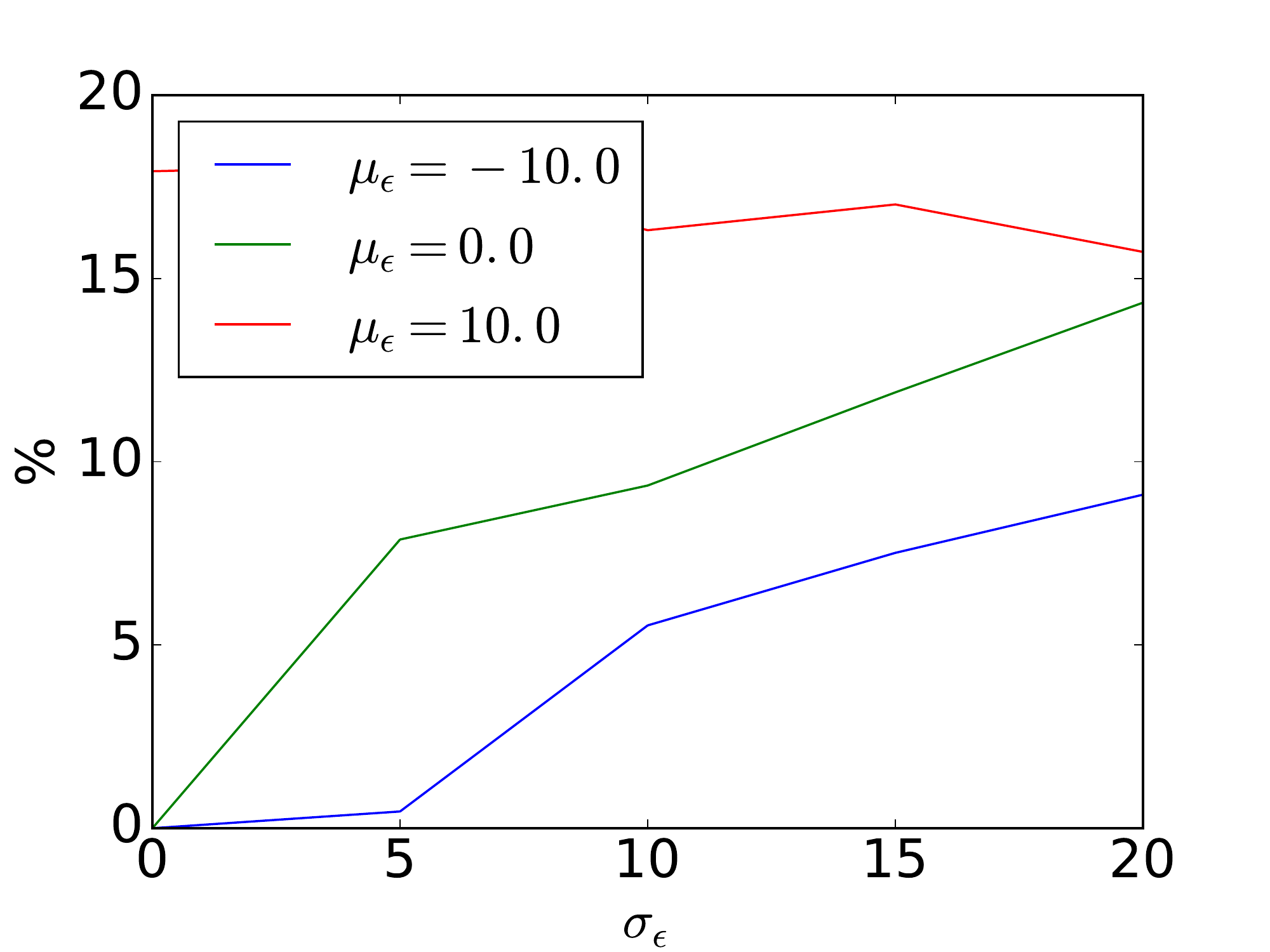}}
\subfloat[Decreasing Demand]{\includegraphics[width=0.28\textwidth]{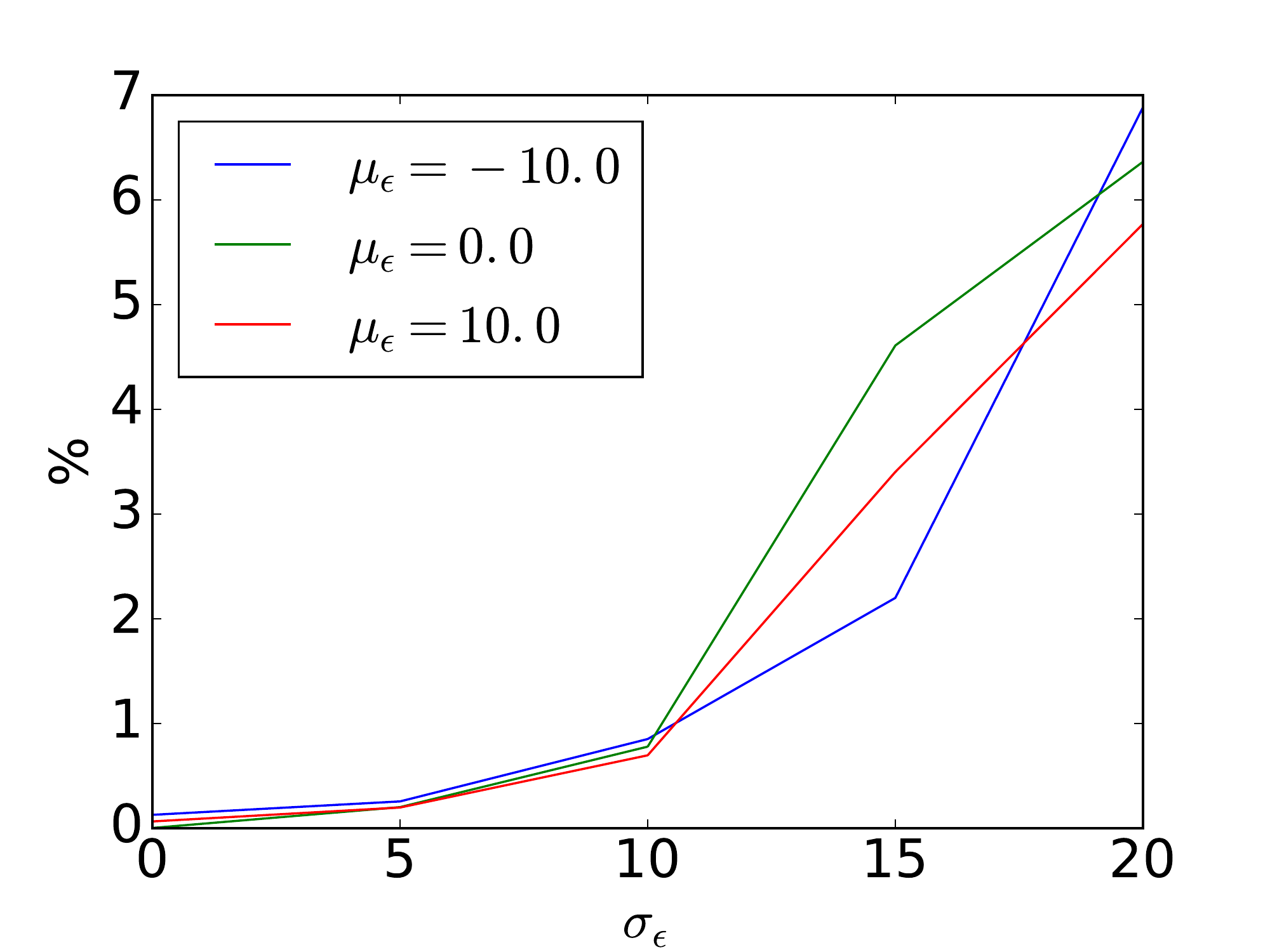}}
\caption{Percentage change in average total costs under $ED$-$FR$.}
\label{fig:inc-total-cost}
\end{minipage}
\end{figure*}

Observe the asymmetry in the plots between the different choices of $\mu_\epsilon$. In particular, when demand is constant ($\mu_d = 0$) or increasing ($\mu_d = 0.0002$), the sub-optimality is less sensitive to negative prediction errors ($\mu_\epsilon = -10$) than to positive prediction errors ($\mu_\epsilon = 10$). This phenomenon is due to high utilization of regulation resources at the optimal solution. Since majority of the regulation resources are hydro resources with low marginal costs, almost all the regulation resources are dispatched in the first time period, and there are less than 30MW of unused regulation capacity. Recall that negative prediction errors create incentive for economic dispatch to use less-than-optimal dispatch resources and more-than-optimal regulation resources. However, in the scenario with increasing demand, system demand increases by up to about 30MW. This deviation must be met by regulation resources, and since there are only 30MW of unused regulation capacity, the economic dispatch mechanism is unable to significantly increase the usage of regulation resources in the first time period, and therefore the sub-optimality is small under negative prediction errors. On the other hand, at the optimal solution, there is significant excess capacity of dispatch resources. Hence, positive prediction errors could lead to significantly more usage of dispatch resources and less usage of regulation resources, and cause a larger increase in total costs. We do not observe this asymmetry when demand is decreasing. This is due to the fact that $\delta_n^* = -80.34$ and hence a larger positive prediction error is needed for the asymmetry to manifest.

The simulations illustrate the complex interactions between $\boldsymbol\delta$ and the performance of $ED$-$FR$. In particular, both marginal costs and feasibility constraints have crucial impacts on the performance of $ED$-$FR$.

\section{conclusion}

This paper proposes an optimization decomposition approach for co-optimizing economic dispatch and frequency regulation resources. It demonstrates that optimization decomposition provides a rigorous way to design power system operations to allocate resources efficiently across timescales. Our main result, in Theorem~\ref{thm:decomposition}, shows one way to choose generation setpoints optimally at the economic dispatch timescale, and provides a guide on how to design a principled architecture for power system operations. In particular, using this result, we design an optimal frequency control scheme and an optimal economic dispatch mechanism, both of which differ from existing approaches in crucial ways and reveal potential inefficiencies in the latter. Hence, this paper underscores the need to jointly analyze economic dispatch and frequency regulation mechanisms when investigating the efficiency of the overall system.


\bibliographystyle{IEEEtran}
\bibliography{IEEEabrv,bibfile}

\section*{APPENDIX}

\begin{proof}[Proof of Theorem~\ref{thm:decomposition}]

The result follows from analyzing the Karush-Kuhn-Tucker (KKT) conditions of $SYSTEM$, $ED$, and $FR$. However, we first reformulate the problems as the notations are simpler with the reformulations. Define $\mathbf{q}_s^p := \mathbf{q}^p + \mathbf{r}_s^p$. Note that, due to the constraint that $\mathbf{r}_1^p = \mathbf{0}$, there is a bijection between the set of feasible $(\mathbf{q}^b,\mathbf{q}^p,\mathbf{r}^p)$ and the set of feasible $(\mathbf{q}^b,\mathbf{q}_1^p,\ldots,\mathbf{q}_S^p)$. Hence, $SYSTEM$ can be reformulated as:
\begin{equation}
\begin{array}{rl}
\underset{{\mathbf{q}}^b,{\mathbf{q}}_1^p,\ldots,{\mathbf{q}}_S^p}{\mathrm{min}} &\!\! \displaystyle\sum_{s\in S} p_s \sum_{n\in N} \left( c_n^b(q_{n}^b) + c_n^p(q_{s,n}^p) \right)
\\
\mathrm{s.t.}\;\;\; &\!\! (\mathbf{q}^b,\mathbf{q}_1^p,\mathbf{q}_s^p-\mathbf{q}_1^p) \in \Omega(\mathbf{d}_s), \quad \forall s\in S.
\end{array}
\label{eq:system}
\end{equation}
Also, $ED(\mathbf{d}_1)$ can be reformulated as:
\begin{align}
&\begin{array}{rl}
\underset{{\mathbf{q}}^b,{\mathbf{q}}_1^p}{\mathrm{min}}\! & \displaystyle\sum_{n\in N} \left( K c_n^b(q_{n}^b) + K c_n^p(q_{1,n}^p) - \delta_n q_n^b\right)
\\
\mathrm{s.t.} & (\mathbf{q}^b,\mathbf{q}_1^p,\mathbf{0}) \in \Omega(\mathbf{d}_1).
\end{array}
\label{eq:ed}
\end{align}
And, $FR(\mathbf{q}^b,\mathbf{q}^p,\mathbf{d}_s)$ can be reformulated as:
\begin{align}
&\begin{array}{rl}
\underset{{\mathbf{q}}^p_s}{\mathrm{min}} & \displaystyle\sum_{n\in N} c_n^p(q_{s,n}^p)
\\
\mathrm{s.t.} & (\mathbf{q}^{b},\mathbf{q}_{1}^{p},\mathbf{q}_{s}^{p}-\mathbf{q}_{1}^{p}) \in \Omega(\mathbf{d}_s).
\end{array}
\label{eq:fr}
\end{align}
Hence, $SYSTEM$ can be optimally decomposed into $ED$-$FR$ if $(\mathbf{q}^b,\mathbf{q}_1^p,\ldots,\mathbf{q}_S^p)$ is an optimal solution to~\eqref{eq:system} if and only if $(\mathbf{q}^b,\mathbf{q}_1^p)$ is an optimal solution to~\eqref{eq:ed} and $\mathbf{q}_s^p$ is an optimal solution to~\eqref{eq:fr} for all $s\in S$. 

Next, we prove (a). It is easy to see that~\eqref{eq:system} has compact sub-level sets. Moreover, its objective function is strictly convex. Hence,~\eqref{eq:system} has a unique optimal solution. By similar arguments, we conclude that~\eqref{eq:ed} has a unique optimal solution, and that~\eqref{eq:fr} has a unique optimal solution if the set $\left\{\mathbf{q}_s^p\in\mathbb{R}^N: (\mathbf{q}^b,\mathbf{q}_1^p,\mathbf{q}_s^p-\mathbf{q}_1^p)\in\Omega(\mathbf{d}_s)\right\}$ is non-empty. Hence, to prove (a), it suffices to show the forward implication, that is, if~\eqref{eq:optimal-delta} holds, then $(\mathbf{q}^{b},\mathbf{q}_1^{p},\ldots,\mathbf{q}_S^p)$ is an optimal solution to~\eqref{eq:system} implies that $(\mathbf{q}^{b},\mathbf{q}_1^{p})$ is an optimal solution to~\eqref{eq:ed} and $\mathbf{q}_s^{p}$ is an optimal solution to~\eqref{eq:fr} for all $s\in S$. The reverse implication follows from the existence and uniqueness of the optimal solutions. 

Let the Lagrangian of~\eqref{eq:system} be denoted by:
\begin{align*}
&L(\mathbf{q}^b,\mathbf{q}_1^p, \ldots,\mathbf{q}_S^p, \ubar{\boldsymbol\xi}, \bar{\boldsymbol\xi}, \ubar{\boldsymbol\nu}, \bar{\boldsymbol\nu}, \ubar{\boldsymbol\mu}, \bar{\boldsymbol\mu}, \boldsymbol\lambda)
\\
&:=
\sum_{s\in S} p_s \sum_{n\in N} \left( c_n^b(q_n^b) + c_n^p(q_{s,n}^p)\right) + L^b(\mathbf{q}^b,\ubar{\boldsymbol\xi},\bar{\boldsymbol\xi})
\\
& 
\ + \sum_{s\in S} p_s L^p(\mathbf{q}_s^p, \ubar{\boldsymbol\nu}_s,\bar{\boldsymbol\nu}_s)
+ \sum_{s\in S} p_s L^f(\mathbf{q}^b,\mathbf{q}_s^p,\ubar{\boldsymbol\mu}_s,\bar{\boldsymbol\mu}_s)
\\
& \ - \sum_{s\in S} p_s \lambda_s \mathbf{1}^\top \left(\mathbf{q}^b + \mathbf{q}_s^p - \mathbf{d}_s\right),
\end{align*}
where:
\begin{align*}
L^b(\mathbf{q}^b,\ubar{\boldsymbol\xi},\bar{\boldsymbol\xi})
&:=
\ubar{\boldsymbol\xi}^\top \left(\ubar{\mathbf{q}}^b - \mathbf{q}^b\right) 
+ \bar{\boldsymbol\xi}^\top \left(\mathbf{q}^b - \bar{\mathbf{q}}^b\right)
\\
L^p(\mathbf{q}_s^p,\ubar{\boldsymbol\nu}_s,\bar{\boldsymbol\nu}_s)
&:=
\ubar{\boldsymbol\nu}_s^\top \left( \ubar{\mathbf{q}}^p - \mathbf{q}_s^p\right) + \bar{\boldsymbol\nu}_s^\top\left(\mathbf{q}_s^p - \bar{\mathbf{q}}^p\right)
\\
L^f(\mathbf{q}^b,\mathbf{q}_s^p,\ubar{\boldsymbol\mu}_s,\bar{\boldsymbol\mu}_s)
&:=
\ubar{\boldsymbol\mu}_s^\top \left(-\mathbf{f} - \mathbf{H}\left(\mathbf{q}^b + \mathbf{q}_s^p  - \mathbf{d}_s\right)\right)
\\
& \ + \bar{\boldsymbol\mu}_s^\top \left( \mathbf{H}\left(\mathbf{q}^b + \mathbf{q}_s^p - \mathbf{d}_s\right) - \mathbf{f} \right).
\end{align*}
Note that we scaled the constraints by their probabilities, and $\ubar{\boldsymbol\xi}\in\mathbb{R}_+^N$, $\bar{\boldsymbol\xi}\in\mathbb{R}_+^N$, $\ubar{\boldsymbol\nu}=(\ubar{\boldsymbol\nu}_s, s\in S)\in\mathbb{R}_+^{NS}$, $\bar{\boldsymbol\nu}=(\bar{\boldsymbol\nu}_s, s\in S)\in\mathbb{R}_+^{NS}$, $\ubar{\boldsymbol\mu}=(\ubar{\boldsymbol\mu}_s, s\in S)\in\mathbb{R}_+^{LS}$, $\bar{\boldsymbol\mu}=(\bar{\boldsymbol\mu}_s, s\in S)\in\mathbb{R}_+^{LS}$, $\boldsymbol\lambda=(\lambda_s, s\in S)\in\mathbb{R}^{S}$ are appropriate Lagrange multipliers. 

Since~\eqref{eq:system} has a convex objective and linear constraints, from the KKT conditions, we infer that $(\mathbf{q}^{b},\mathbf{q}_1^{p},\ldots,\mathbf{q}_S^p)$ is an optimal solution to~\eqref{eq:system} if and only if $(\mathbf{q}^{b},\mathbf{q}_1^{p},\mathbf{q}_s^p-\mathbf{q}_1^p)\in \Omega(\mathbf{d}_s)$ for all $s\in S$ and there exists $\ubar{\boldsymbol\xi},\bar{\boldsymbol\xi}\in \mathbb{R}_+^N, \ubar{\boldsymbol\nu},\bar{\boldsymbol\nu} \in \mathbb{R}_+^{NS}, \ubar{\boldsymbol\mu},\bar{\boldsymbol\mu} \in \mathbb{R}_+^{LS}, \boldsymbol\lambda \in \mathbb{R}^S$ such that:
\begin{subequations}
\label{eq:system-kkt}
\begin{align}
\hspace{-10pt}
\left(Kc_n^{b\prime}(q_n^{b}), n \in N\right) + \bar{\boldsymbol\xi} - \ubar{\boldsymbol\xi} - \sum_{s\in S} p_{s} \boldsymbol\pi(\lambda_{s},\ubar{\boldsymbol\mu}_{s},\bar{\boldsymbol\mu}_{s}) &= 0;
\label{eq:system-kkt-a}
\\
L^b(\mathbf{q}^{b},\ubar{\boldsymbol\xi},\bar{\boldsymbol\xi}) &= 0;
\label{eq:system-kkt-b}
\\
\hspace{-14pt}\left(c_n^{p\prime}(q_{s,n}^p), n\in N\right) + \bar{\boldsymbol\nu}_s - \ubar{\boldsymbol\nu}_s - \boldsymbol\pi(\lambda_s,\ubar{\boldsymbol\mu}_s,\bar{\boldsymbol\mu}_s) &= 0;
\label{eq:system-kkt-c}
\\
L^p(\mathbf{q}_s^{p},\ubar{\boldsymbol\nu}_s,\bar{\boldsymbol\nu}_s) &= 0;
\label{eq:system-kkt-d}
\\
L^f(\mathbf{q}^{b}, \mathbf{q}_s^{p},\ubar{\boldsymbol\mu}_s,\bar{\boldsymbol\mu}_s) &= 0,
\label{eq:system-kkt-e}
\end{align}
\end{subequations}
for all $s\in S$.

Similarly, $(\mathbf{q}^b,\mathbf{q}_1^p)$ is an optimal solution to~\eqref{eq:ed} if and only if $(\mathbf{q}^b, \mathbf{q}_1^p,\mathbf{0}) \in \Omega(\mathbf{d}_1)$ and there exists $\ubar{\boldsymbol\xi}, \bar{\boldsymbol\xi} \in \mathbb{R}_+^N, \ubar{\boldsymbol\nu}_1, \bar{\boldsymbol\nu}_1 \in \mathbb{R}_+^{N}, \ubar{\boldsymbol\mu}_1, \bar{\boldsymbol\mu}_1 \in \mathbb{R}_+^{L}, \lambda_1 \in \mathbb{R}$ such that:
\begin{subequations}
\label{eq:ed-kkt}
\begin{align}
\hspace{-4pt}
\left(Kc_n^{b\prime}(q_n^b), n\in N\right) + \bar{\boldsymbol\xi} - \ubar{\boldsymbol\xi} - \boldsymbol\pi(\lambda_{1},\ubar{\boldsymbol\mu}_{1},\bar{\boldsymbol\mu}_{1}) - \boldsymbol\delta &= 0;
\label{eq:ed-kkt-a}
\\
L^b(\mathbf{q}^b, \ubar{\boldsymbol\xi}, \bar{\boldsymbol\xi}) &= 0;
\label{eq:ed-kkt-b}
\\
\hspace{-3pt}\left(Kc_n^{p\prime}(q_{1,n}^p), n \in N\right) + \bar{\boldsymbol\nu}_1 - \ubar{\boldsymbol\nu}_1 - \boldsymbol\pi(\lambda_1,\ubar{\boldsymbol\mu}_1,\bar{\boldsymbol\mu}_1) &= 0;
\label{eq:ed-kkt-c}
\\
L^p(\mathbf{q}_1^{p},\ubar{\boldsymbol\nu}_1,\bar{\boldsymbol\nu}_1) &= 0;
\label{eq:ed-kkt-d}
\\
L^f(\mathbf{q}^{b}, \mathbf{q}_1^{p}, \ubar{\boldsymbol\mu}_1, \bar{\boldsymbol\mu}_1) &= 0.
\label{eq:ed-kkt-e}
\end{align}
\end{subequations}

And $\mathbf{q}_s^p$ is an optimal solution to~\eqref{eq:fr} if and only if $(\mathbf{q}^{b},\mathbf{q}_1^p,\mathbf{q}_s^p-\mathbf{q}_1^p) \in \Omega(\mathbf{d}_s)$ and there exists $\ubar{\boldsymbol\nu}_s, \bar{\boldsymbol\nu}_s \in \mathbb{R}_+^{N}, \ubar{\boldsymbol\mu}_s, \bar{\boldsymbol\mu}_s \in \mathbb{R}_+^{L}, \lambda_s \in \mathbb{R}$ such that:
\begin{subequations}
\label{eq:fr-kkt}
\begin{align}
\left(c_n^{p\prime}(q_{s,n}^p), n\in N\right) + \bar{\boldsymbol\nu}_s - \ubar{\boldsymbol\nu}_s - \boldsymbol\pi(\lambda_s,\ubar{\boldsymbol\mu}_s,\bar{\boldsymbol\mu}_s) &= 0;
\label{eq:fr-kkt-a}
\\
L^p(\mathbf{q}_s^{p},\ubar{\boldsymbol\nu}_s,\bar{\boldsymbol\nu}_s) &= 0;
\label{eq:fr-kkt-b}
\\
L^f(\mathbf{q}^{b}, \mathbf{q}_s^{p}, \ubar{\boldsymbol\mu}_s,\bar{\boldsymbol\mu}_s) &= 0.
\label{eq:fr-kkt-c}
\end{align}
\end{subequations}

Suppose $(\mathbf{q}^b,\mathbf{q}_1^p,\ldots,\mathbf{q}_S^p)$ is an optimal solution to~\eqref{eq:system} with associated Lagrange multipliers $(\ubar{\boldsymbol\xi},\bar{\boldsymbol\xi}, \ubar{\boldsymbol\nu}, \bar{\boldsymbol\nu}, \ubar{\boldsymbol\mu}, \bar{\boldsymbol\mu}, \boldsymbol\lambda)$. Note that $(\mathbf{q}^b,\mathbf{q}_1^p,\mathbf{0}) \in\Omega(\mathbf{d}_1)$. From the fact that the variables $(\mathbf{q}^b,\ubar{\boldsymbol\xi},\bar{\boldsymbol\xi},\allowbreak\ubar{\boldsymbol\mu},\bar{\boldsymbol\mu},\lambda)$ satisfy~\eqref{eq:system-kkt-a} and~\eqref{eq:optimal-delta} and the fact that $\sum_{s\in S}p_s = K$, we infer that the variables $(\mathbf{q}^b,\ubar{\boldsymbol\xi},\bar{\boldsymbol\xi},\allowbreak K\ubar{\boldsymbol\mu}_1,K\bar{\boldsymbol\mu}_1,K\lambda_1)$ satisfy~\eqref{eq:ed-kkt-a}. From the fact that $(\mathbf{q}^b,\mathbf{q}_s^p,\bar{\boldsymbol\xi},\ubar{\boldsymbol\xi},\bar{\boldsymbol\nu}_s,\ubar{\boldsymbol\nu}_s,\bar{\boldsymbol\mu}_s,\ubar{\boldsymbol\mu}_s,\lambda_s)$ satisfy~\eqref{eq:system-kkt-b}~--~\eqref{eq:system-kkt-e}, we infer that the variables $(\mathbf{q}^b,\mathbf{q}_1^p,\bar{\boldsymbol\xi},\ubar{\boldsymbol\xi},K\bar{\boldsymbol\nu}_1,K\ubar{\boldsymbol\nu}_1,K\bar{\boldsymbol\mu}_1,K\ubar{\boldsymbol\mu}_1,K\lambda_1)$ satisfy~\eqref{eq:ed-kkt-b}~--~\eqref{eq:ed-kkt-e}. Hence, $(\mathbf{q}^b,\mathbf{q}_1^p)$ is an optimal solution to~\eqref{eq:ed}. Note also that $(\mathbf{q}^b,\mathbf{q}_1^p,\mathbf{q}_s^p-\mathbf{q}_1^p)\in\Omega(\mathbf{d}_s)$ for all $s\in S$. From the fact that the variables $(\mathbf{q}^b,\mathbf{q}_s^p,\ubar{\boldsymbol\nu}_s,\bar{\boldsymbol\nu}_s,\ubar{\boldsymbol\mu}_s,\bar{\boldsymbol\mu}_s,\lambda_s)$ satisfy~\eqref{eq:system-kkt-c}~--~\eqref{eq:system-kkt-e}, we infer that those variables satisfy~\eqref{eq:fr-kkt}. Hence, $\mathbf{q}_s^p$ is an optimal solution to~\eqref{eq:fr} for all $s\in S$. 

Next, we prove (b). Let $(\mathbf{q}^b,\mathbf{q}_1^p,\ldots,\mathbf{q}_S^p)$ be a solution to~\eqref{eq:system} such that $(\mathbf{q}^b,\mathbf{q}_1^p)$ is a solution to~\eqref{eq:ed}. If $\ubar{q}_n^b < q_n^{b} < \bar{q}_n^b$ and $\ubar{q}_n^p < q_{1,n}^{p} < \bar{q}_n^p$, then the complementary slackness conditions imply that $\ubar{\xi}_n = \bar{\xi}_n = 0$ and $\ubar{\nu}_{1,n} = \bar{\nu}_{1,n} = 0$. From the KKT conditions of~\eqref{eq:system}, which are given by~\eqref{eq:system-kkt}, we infer that:
\begin{align}
K c_n^{b\prime}(q_n^{b}) - \sum_{s\in S} p_s \pi_n(\lambda_s, \ubar{\boldsymbol\mu}_s, \bar{\boldsymbol\mu}_s) = 0;
\label{eq:system-kkt-short-a}
\\
c_n^{p\prime}(q_{1,n}^{p}) - \pi_n(\lambda_1, \ubar{\boldsymbol\mu}_1, \bar{\boldsymbol\mu}_1) = 0,
\label{eq:system-kkt-short-b}
\end{align}
where $(\ubar{\boldsymbol\mu}_s,\bar{\boldsymbol\mu}_s,\boldsymbol\lambda)$ are the associated Lagrange multipliers. From the KKT conditions of~\eqref{eq:ed}, which are given by~\eqref{eq:ed-kkt}, we infer that:
\begin{align}
\label{eq:ed-kkt-short-a}
K c_n^{b\prime}(q_n^{b}) - \pi_n(\lambda_1^\prime,\ubar{\boldsymbol\mu}_1^\prime, \bar{\boldsymbol\mu}_1^\prime) - \delta_n = 0;
\\
\label{eq:ed-kkt-short-b}
K c_n^{p\prime}(q_{1,n}^{p}) - \pi_n(\lambda_1^\prime, \ubar{\boldsymbol\mu}_1^\prime, \bar{\boldsymbol\mu}_1^\prime) = 0,
\end{align}
where $(\ubar{\boldsymbol\mu}_s^\prime,\bar{\boldsymbol\mu}_s^\prime,\boldsymbol\lambda^\prime)$ are the associated Lagrange multipliers. It follows that:
\begin{align*}
\delta_n
&=
\sum_{s\in S}p_s \pi_n(\lambda_s,\ubar{\boldsymbol\mu}_s,\bar{\boldsymbol\mu}_s) - \pi_n(\lambda_1^\prime,\ubar{\boldsymbol\mu}_1^\prime,\bar{\boldsymbol\mu}_1^\prime)
\\
&=
\sum_{s\in S}p_s \pi_n(\lambda_s,\ubar{\boldsymbol\mu}_s,\bar{\boldsymbol\mu}_s) - K\pi_n(\lambda_1,\ubar{\boldsymbol\mu}_1,\bar{\boldsymbol\mu}_1)
\\
&=
\sum_{s\in S}p_s \left(\pi_n(\lambda_s,\ubar{\boldsymbol\mu}_s,\bar{\boldsymbol\mu}_s) - \pi_n(\lambda_1,\ubar{\boldsymbol\mu}_1,\bar{\boldsymbol\mu}_1)\right).
\end{align*}
The first equality follows from comparing~\eqref{eq:system-kkt-short-a} and~\eqref{eq:ed-kkt-short-a}. The second equality follows from comparing~\eqref{eq:system-kkt-short-b} and~\eqref{eq:ed-kkt-short-b}. The last equality follows from the fact that $\sum_{s\in S}p_s = K$.  
\end{proof}

\begin{proof}[Proof of Proposition~\ref{prop:optimality}]

We provide a proof sketch of this result. The skipped details can be found in~\cite{2014arXiv1410.2931M}.
(i) follows from the KKT conditions of $FR'(\mathbf{q}^b,\mathbf{q}^p,\mathbf{d}_s)$ and is shown in~\cite[Lemma 2]{2014arXiv1410.2931M}. Since $\boldsymbol\omega_s^{\prime}=0$, it follows from constraints~\eqref{eq:FR'_k-b} and~\eqref{eq:FR'_k-c} of $FR'(\mathbf{q}^b,\mathbf{q}^p,\mathbf{d}_s)$ that
$
\mathbf{L}\boldsymbol\theta_s^{\prime} = \mathbf{L}\boldsymbol\phi_s^{\prime},
$
which, since the null space of $\mathbf{L}$ is $\spn\{\mathbf{1}\}$, implies that $\boldsymbol\theta_s^{\prime}=\boldsymbol\phi_s^{\prime} + \alpha{\mathbf{1}}$ for some $\alpha \in \mathbb{R}$. This implies that $\mathbf{B}\mathbf{C}^\top\boldsymbol\phi_s^{\prime} = \mathbf{B}\mathbf{C}^\top\boldsymbol\theta_s^{\prime}$.  Therefore, without loss of generality, we can substitute constraint~\eqref{eq:FR'_k-b} in $FR'(\mathbf{q}^b,\mathbf{q}^p,\mathbf{d}_s)$ by the constraint $\boldsymbol\omega_s=0$. 
Then, using the definition of $\mathbf{H}$ and the equivalence between~\eqref{eq:supply-demand-constraint} and~\eqref{eq:supply-demand-constraint-2}, we infer that the feasible sets of $FR(\mathbf{q}^b,\mathbf{q}^p,\mathbf{d}_s)$ and $FR'(\mathbf{q}^b,\mathbf{q}^p,\mathbf{d}_s)$ are equivalent. Finally, since $c_n^p(\cdot)$ is strictly convex, by uniqueness of the optimal solutions, we get (ii). Lastly, (iii) follows from the definition of $\mathbf{H}$ and $\mathbf{B}\mathbf{C}^\top\boldsymbol\phi_s^{\prime}=\mathbf{B}\mathbf{C}^\top\boldsymbol\theta_s^{\prime}$. The final statement of the proposition follows directly from~\cite[Theorem 8]{2014arXiv1410.2931M}.
\end{proof}

\begin{proof}[Proof of Proposition~\ref{prop:market}]

Our proof proceeds in 6 steps: (1) Characterizing regulation generators' optimal bids $\boldsymbol\alpha^p$ given their prices $\boldsymbol\pi^p$; (2) Characterizing dispatch generators' optimal bids $\boldsymbol\alpha^b$ given their prices $\boldsymbol\pi^b$; (3) Characterizing prices $(\boldsymbol\pi^b,\boldsymbol\pi^p)$ given bids $(\boldsymbol\alpha^b,\boldsymbol\alpha^p)$ using KKT conditions; (4) Showing that, at an equilibrium, the production schedule is the unique optimal solution to $\hat{ED}$-$\hat{FR}$; (5) Showing that any production schedule $(\mathbf{q}^{b},\mathbf{q}^{p},\mathbf{r}^p)$ that solves $SYSTEM$ can be obtained using bids $(\boldsymbol\gamma^b,\boldsymbol\gamma^p)$ and the latter satisfy the equilibrium characterizations in steps 1 to 3; and (6) Showing that any bids $(\boldsymbol\alpha^b,\boldsymbol\alpha^p)$ that satisfy the equilibrium characterizations in steps 1 to 3 give the same production schedule as that under bids $(\boldsymbol\gamma^b,\boldsymbol\gamma^p)$ (which also solves $SYSTEM$). Note that part (a) follows from step 6 and part (b) follows from step 5.

\emph{Step 1: Characterizing regulation generators' optimal bids $\boldsymbol\alpha^p$ given their prices $\boldsymbol\pi^p$.} Since $c_n^p$ is strictly convex and $c_n^p(q_{s,n}^p)\!\rightarrow\!+\infty$ as $q_{s,n}^p\!\rightarrow\!\{\ubar{q}_n^p,\bar{q}_n^p\}$, $c_n^{p\prime}$ is invertible. Let $\sigma\!:\! S\!\rightarrow \!S$ be any permutation function that satisfies:
\begin{align*}
c_n^{p\prime-1}(\pi_{\sigma(1),n}^p) \leq c_n^{p\prime-1}(\pi_{\sigma(2),n}^p) \leq \ldots \leq c_n^{p\prime-1}(\pi_{\sigma(S),n}^p),
\end{align*}
and let integers $i,j\in \{0,1,\ldots,S\} $ be such that:
\begin{subequations}
\begin{align}
& \ c_n^{p\prime-1}(\pi_{\sigma(s),n}^p) \leq \ubar{q}_n^p && \forall s = 1,\ldots,i;
\label{eq:peaker-tight-lower-capacity}
\\
\ubar{q}_n^p < & \ c_n^{p\prime-1}(\pi_{\sigma(s),n}^p) < \bar{q}_n^p && \forall s = i+1,\ldots,j;
\label{eq:peaker-non-binding-capacity}
\\
\bar{q}_n^p \leq & \ c_n^{p\prime-1}(\pi_{\sigma(s),n}^p)  && \forall s = j+1,\ldots,S. 
\label{eq:peaker-tight-upper-capacity}
\end{align}
\end{subequations}
We now show that $\alpha_n^p \in\mathbb{R}_{++}$ maximizes $\mathrm{PF}_n^p$ if and only if: 
\begin{subequations}
\label{eq:peaker-optimal-bid}
\begin{align}
\hspace{-9pt}
\alpha_n^p s_n^{p}(\pi_{\sigma(s),n}^p) &\leq \ubar{q}_n^p && \forall s = 1,\ldots, i;
\label{eq:peaker-optimal-bid-a}
\\
\hspace{-9pt}
\alpha_n^p s_n^{p}(\pi_{\sigma(s),n}^p) &= c_n^{p\prime -1}(\pi_{\sigma(k),n}^p) && \forall s=i+1,\ldots,j;
\label{eq:peaker-optimal-bid-b}
\\
\hspace{-9pt}
\alpha_n^p s_n^{p}(\pi_{\sigma(s),n}^p) & \geq \bar{q}_n^p && \forall s = j+1,\ldots,S.
\label{eq:peaker-optimal-bid-c}
\end{align}
\end{subequations}

For notational brevity, in the rest of this step, we abuse notation and let: 
$$q_{s,n}^p(\alpha_n^p) =
[\alpha_n^p s_n^{p}(\pi_{\sigma(s),n}^p)]_{\ubar{q}_n^p}^{\bar{q}_n^p}.$$ 
To prove our characterization, it suffices to show that, given any $\alpha_n^p \in \mathbb{R}_{++}$ that satisfies~\eqref{eq:peaker-optimal-bid}, the vector of per-outcome profits
\begin{align}
& \left(\pi_{\sigma(s),n}^pq_{s,n}^p(\alpha_n^p) - c_n^p\left(q_{s,n}^p(\alpha_n^p)\right), s\in S\right)
\notag
\\
& \gneq
\left(\pi_{\sigma(s),n}^p q_{s,n}^p(\bar{\alpha}_n^p) - c_n^p\left(q_{s,n}^p(\bar{\alpha}_n^p)\right), s\in S\right)
\label{eq:peaker-bid-vector-inequality}
\end{align}
for any $\bar{\alpha}_n^p$ that does not satisfy~\eqref{eq:peaker-optimal-bid}. Since $p_{\sigma(s)} > 0$ for all $s\in S$, it then follows that:
\begin{align*}
\left. \mathrm{PF}_n^p \right|_{\alpha_n^p} 
&=
\sum_s p_{\sigma(s)} \left( \pi_{\sigma(s),n}^p q_{s,n}^p(\alpha_n^p) - c_n^p\left(q_{s,n}^p(\alpha_n^p)\right)\right)
\\
&>
\sum_s p_{\sigma(s)} \left(\pi_{\sigma(s),n}^p q_{s,n}^p(\bar{\alpha}_n^p) - c_n^p\left(q_{s,n}^p(\bar{\alpha}_n^p)\right)\right)
\\
&=
\left. \mathrm{PF}_n^p \right|_{\bar{\alpha}_n^p}.
\end{align*}

Suppose $s \in \{1,\ldots,i\}$. From~\eqref{eq:peaker-tight-lower-capacity} and the fact that $c_n^p$ is strictly convex, we infer that $\pi_{\sigma(s),n}^p \leq c_n^{p\prime}(\ubar{q}_n^p)$. From~\eqref{eq:peaker-optimal-bid-a}, we infer that $q_{s,n}^p(\alpha_n^p) = \ubar{q}_n^p$. Then:
\begin{align*}
&
c_n^p(q_{s,n}^p(\bar{\alpha}_n^p))
\\
&\geq 
c_n^p(\ubar{q}_n^p) + c_n^{p\prime}(\ubar{q}_n^p)\left(q_{s,n}^p(\bar{\alpha}_n^p) - \ubar{q}_n^p\right)
\\
&\geq 
c_n^p(\ubar{q}_n^p) + \pi_{\sigma(s),n}^p\left(q_{s,n}^p(\bar{\alpha}_n^p) - \ubar{q}_n^p\right)
\\
&=
c_n^p(q_{s,n}^p(\alpha_n^p)) + \pi_{\sigma(s),n}^p\left(q_{s,n}^p(\bar{\alpha}_n^p) - q_{s,n}^p(\alpha_n^p)\right),
\end{align*}
where the first inequality follows from the fact that $c_n^p$ is strictly convex, the second inequality follows from $\pi_{\sigma(s),n}^p \leq c_n^{p\prime}(\ubar{q}_n^p)$ and $q_{s,n}^p(\bar{\alpha}_n^p) \geq \ubar{q}_n^p$, and the last equality follows from $q_{s,n}^p(\alpha_n^p) = \ubar{q}_n^p$. Furthermore, if $q_{s,n}^p(\bar{\alpha}_n^p) > \ubar{q}_n^p$, then the first inequality is strict, and hence:
\begin{align*}
&c_n^p(q_{s,n}^p(\bar{\alpha}_n^p))
\\
&>
c_n^p(q_{s,n}^p(\alpha_n^p)) + \pi_{\sigma(s),n}^p\left(q_{s,n}^p(\bar{\alpha}_n^p) - q_{s,n}^p(\alpha_n^p)\right).
\end{align*}

Suppose $s \in \{i+1,\ldots,j\}$. From~\eqref{eq:peaker-non-binding-capacity} and~\eqref{eq:peaker-optimal-bid-b}, we infer that $q_{s,n}^p(\alpha_n^p) = c_n^{p\prime -1}(\pi_{\sigma(s),n}^p)$ and $\ubar{q}_n^p < q_{s,n}^p(\alpha_n^p) < \bar{q}_n^p$. From $\ubar{q}_n^p < q_{s,n}^p(\alpha_n^p) < \bar{q}_n^p$, and the fact that $s_n^{p}(\pi_{\sigma(s),n}^p) \neq 0$ and $\bar{\alpha}_n^p \neq \alpha_n^p$, we infer that $q_{s,n}^p(\bar{\alpha}_n^p) \neq q_{s,n}^p(\alpha_n^p)$. Then:
\begin{align*}
&c_n^p(q_{s,n}^p(\bar{\alpha}_n^p))
\\
&>
c_n^p(q_{s,n}^p(\alpha_n^p)) + c_n^{p\prime}(q_{s,n}^p(\alpha_n^p))\left(q_{s,n}^p(\bar{\alpha}_n^p)-q_{s,n}^p(\alpha_n^p)\right)
\\
&=
c_n^p(q_{s,n}^p(\alpha_n^p)) + \pi_{\sigma(s),n}^p\left(q_{s,n}^p(\bar{\alpha}_n^p) - q_{s,n}^p(\alpha_n^p)\right),
\end{align*}
where the first inequality follows from the fact that $c_n^p$ is strictly convex and $q_{s,n}^p(\bar{\alpha}_n^p) \neq q_{s,n}^p(\alpha_n^p)$ and the equality follows from $q_{s,n}^p(\alpha_n^p) = c_n^{p\prime -1}(\pi_{\sigma(s),n}^p)$.

Suppose $s \in \{i+1,\ldots,S\}$. From~\eqref{eq:peaker-tight-upper-capacity} and the fact that $c_n^p$ is strictly convex, we infer that $\pi_{\sigma(s),n}^p \geq c_n^{p\prime}(\bar{q}_n^p)$. From~\eqref{eq:peaker-optimal-bid-c}, we infer that $q_{s,n}^p(\alpha_n^p) = \bar{q}_n^p$. Then:
\begin{align*}
&
c_n^p(q_{s,n}^p(\bar{\alpha}_n^p))
\\
&\geq 
c_n^p(\bar{q}_n^p) + c_n^{p\prime}(\bar{q}_n^p)\left(q_{s,n}^p(\bar{\alpha}_n^p) - \bar{q}_n^p\right)
\\
&\geq 
c_n^p(\bar{q}_n^p) + \pi_{\sigma(s),n}^p\left(q_{s,n}^p(\bar{\alpha}_n^p) - \bar{q}_n^p\right)
\\
&=
c_n^p(q_{s,n}^p(\alpha_n^p)) + \pi_{\sigma(s),n}^p\left(q_{s,n}^p(\bar{\alpha}_n^p) - q_{s,n}^p(\alpha_n^p)\right),
\end{align*}
where the first inequality follows from the fact that $c_n^p$ is strictly convex, the second inequality follows from $\pi_{\sigma(s),n}^p \geq c_n^{p\prime}(\bar{q}_n^p)$ and $q_{s,n}^p(\bar{\alpha}_n^p) \leq \bar{q}_n^p$, and the last equality follows from $q_{s,n}^p(\alpha_n^p) = \bar{q}_n^p$. Furthermore, if $q_{s,n}^p(\bar{\alpha}_n^p) < \bar{q}_n^p$, then the first inequality is strict, and hence:
\begin{align*}
&
c_n^p(q_{s,n}^p(\bar{\alpha}_n^p))
\\
&>
c_n^p(q_{s,n}^p(\alpha_n^p)) + \pi_{\sigma(s),n}^p\left(q_{s,n}^p(\bar{\alpha}_n^p) - q_{s,n}^p(\alpha_n^p)\right).
\end{align*}

Hence, for all $s\in S$:
\begin{align}
&c_n^p(q_{s,n}^p(\bar{\alpha}_n^p))
\notag
\\
&\geq
c_n^p(q_{s,n}^p(\alpha_n^p)) + \pi_{\sigma(s),n}^p\left(q_{s,n}^p(\bar{\alpha}_n^p - q_{s,n}^p(\alpha_n^p)\right).
\label{eq:peaker-optimal-bid-inequality}
\end{align}
Moreover, this inequality is strict for some $s\in S$. If $i < j$, the inequality is strict for $s \in \{i+1,\ldots,j\}$. If $i = j$, then, since $\bar{\alpha}_n^p$ does not satisfy~\eqref{eq:peaker-optimal-bid}, there exists some $s\in\{1,\ldots,i\}$ such that $\alpha_n^p s_n^p(\pi_{\sigma(s),n}^p) > \ubar{q}_n^p$ or some $s\in\{i+1,\ldots,S\}$ such that $\alpha_n^p s_n^p(\pi_{\sigma(s),n}^p) < \bar{q}_n^p$,  
and hence there exists some $s\in\{1,\ldots,i\}$ such that $q_{s,n}^p(\bar{\alpha}_n^p) > \ubar{q}_n^p$ or some $s\in\{i+1,\ldots,S\}$ such that $q_{s,n}^p(\bar{\alpha}_n^p) < \bar{q}_n^p$, and the inequality in~\eqref{eq:peaker-optimal-bid-inequality} is strict for that $s$. Hence, we conclude that:
\begin{align*}
& \left(c_n^p(q_{s,n}^p(\bar{\alpha}_n^p)), s\in S \right)
\\
& \gneq
\left(c_n^p(q_{s,n}^p(\alpha_n^p)) + \pi_{\sigma(s),n}^p\left(q_{s,n}^p(\bar{\alpha}_n^p) - q_{s,n}^p(\alpha_n^p)\right), s\in S\right)
\end{align*}
for any $\bar{\alpha}_n^p$ that does not satisfy~\eqref{eq:peaker-optimal-bid}. By rearranging terms, we obtain~\eqref{eq:peaker-bid-vector-inequality}.

\emph{Step 2: Characterizing dispatch generators' optimal bids $\boldsymbol\alpha^b$ given their prices $\boldsymbol\pi^b$.} Note that the profit maximization problem for a dispatch generator is a special case of that for a regulation generator with $S = 1$. By applying the characterization in step 1, we infer that $\alpha_n^b\in \mathbb{R}_{++}$ maximizes $\mathrm{PF}_n^b$ if and only if:
\begin{subequations}
\label{eq:baseload-optimal-bid}
\begin{align}
\alpha_n^b s_n^{b}(\pi_{n}^b) & \leq \ubar{q}_n^b, && \text{if} \quad\quad\,\,\,\, c_n^{b\prime-1}(\pi_{n}^b) \leq \ubar{q}_n^b;
\label{eq:baseload-optimal-bid-a}
\\
\alpha_n^b &= \gamma_n^b, &&  \text{if} \ \ubar{q}_n^b < c_n^{b\prime-1}(\pi_{n}^b) < \bar{q}_n^b;
\label{eq:baseload-optimal-bid-b}
\\
\alpha_n^b s_n^{b}(\pi_{n}^b) & \geq \bar{q}_n^b, &&  \text{if} \ \bar{q}_n^b \leq c_n^{b\prime-1}(\pi_{n}^b).
\label{eq:baseload-optimal-bid-c}
\end{align}
\end{subequations}

\emph{Step 3: Characterizing prices $(\boldsymbol\pi^b,\boldsymbol\pi^p)$ given bids $(\boldsymbol\alpha^b,\boldsymbol\alpha^p)$ using KKT conditions.} First, we take the same approach as in the proof of Theorem~\ref{thm:decomposition} and reformulate $\hat{ED}$ and $\hat{FR}$ before applying the KKT conditions. Relabeling the variable $\mathbf{q}^p$ to $\mathbf{q}_1^p$ in $\hat{ED}$ gives:
\begin{align}
&\begin{array}{rl}
\underset{{\mathbf{q}}^b,{\mathbf{q}}_1^p}{\mathrm{min}}\! & \displaystyle\sum_{n\in N} \left( K \hat{c}_n^b(q_{n}^b) + K \hat{c}_n^p(q_{1,n}^p) - \delta_n q_n^b\right)
\\
\mathrm{s.t.} & (\mathbf{q}^b,\mathbf{q}_1^p,\mathbf{0}) \in \Omega(\mathbf{d}_1).
\end{array}
\label{eq:edhat}
\end{align}
And substituting $\mathbf{q}_s^p = \mathbf{q}^p + \mathbf{r}_s^p$ in $\hat{FR}$ gives:
\begin{align}
&\begin{array}{rl}
\underset{{\mathbf{q}}^p_s}{\mathrm{min}} & \displaystyle\sum_{n\in N} \hat{c}_n^p(q_{s,n}^p)
\\
\mathrm{s.t.} & (\mathbf{q}^{b},\mathbf{q}_{1}^{p},\mathbf{q}_{s}^{p}-\mathbf{q}_{1}^{p}) \in \Omega(\mathbf{d}_s).
\end{array}
\label{eq:frhat}
\end{align}

Substituting $s_n^b = c_n^{b\prime-1}(\cdot)/\gamma_n^b$ and $s_n^p = c_n^{p\prime-1}(\cdot)/\gamma_n^p$ into the definition of $\hat{c}_n^b$ and $\hat{c}_n^p$ implies that:
\begin{align*}
\hat{c}_n^b(q_n^b)
&=
\int_{\ubar{q}_n^b}^{q_n^b} c_n^{b\prime}((\gamma_n^b/\alpha_n^b)w)\;dw,
\\
\hat{c}_n^p(q_n^p)
&=
\int_{\ubar{q}_n^p}^{q_n^p} c_n^{p\prime}((\gamma_n^p/\alpha_n^p)w)\;dw.
\end{align*}
Hence,~\eqref{eq:edhat} has a continuous and strictly convex objective and linear constraints. Thus, from the KKT conditions, $(\mathbf{q}^b,\mathbf{q}_1^p)$ is an optimal solution to~\eqref{eq:edhat} if and only if $(\mathbf{q}^b, \mathbf{q}_1^p,\mathbf{0}) \in \Omega(\mathbf{d}_1)$ and there exists $\ubar{\boldsymbol\xi}, \bar{\boldsymbol\xi} \in \mathbb{R}_+^N, \ubar{\boldsymbol\nu}_1, \bar{\boldsymbol\nu}_1 \in \mathbb{R}_+^{N}, \ubar{\boldsymbol\mu}_1, \bar{\boldsymbol\mu}_1 \in \mathbb{R}_+^{L}, \lambda_1 \in \mathbb{R}$ such that:
\begin{subequations}
\label{eq:market-ed-kkt}
\begin{align}
\left(K c_n^{b\prime}((\gamma_n^b/\alpha_n^b)q_n^b), n\in N\right) + \bar{\boldsymbol\xi} - \ubar{\boldsymbol\xi} - K\boldsymbol\pi^b &= 0;
\label{eq:market-ed-baseload-foc}
\\
L^b(\mathbf{q}^b, \ubar{\boldsymbol\xi}, \bar{\boldsymbol\xi}) &= 0;
\label{eq:market-ed-baseload-comp-slack}
\\
\hspace{-2pt}\left(K c_n^{p\prime}((\gamma_n^p/\alpha_n^p)q_{1,n}^p), n\in N\right) + \bar{\boldsymbol\nu}_1 - \ubar{\boldsymbol\nu}_1 - K\boldsymbol\pi_1^p &= 0;
\label{eq:market-ed-peaker-foc}
\\
L^p(\mathbf{q}_1^{p},\ubar{\boldsymbol\nu}_1,\bar{\boldsymbol\nu}_1) &= 0;
\label{eq:market-ed-peaker-comp-slack}
\\
L^f(\mathbf{q}^{b}, \mathbf{q}_1^{p}, \ubar{\boldsymbol\mu}_1,\bar{\boldsymbol\mu}_1) &= 0,
\end{align}
where:
\begin{align}
\boldsymbol\pi^b 
&= (1/K)\left(\boldsymbol\pi(\lambda_1,\ubar{\boldsymbol\mu}_1,\bar{\boldsymbol\mu}_1) + \boldsymbol\delta\right);
\label{eq:market-ed-baseload-price}
\\
\boldsymbol\pi_1^p
&= (1/K)\boldsymbol\pi(\lambda_1,\ubar{\boldsymbol\mu}_1,\bar{\boldsymbol\mu}_1).
\label{eq:market-ed-peaker-price}
\end{align}
\end{subequations}
Similarly, from the KKT conditions, $\mathbf{q}_s^p$ is an optimal solution to~\eqref{eq:frhat} if and only if $(\mathbf{q}^{b},\mathbf{q}_1^p,\mathbf{q}_s^p-\mathbf{q}_1^p) \in \Omega(\mathbf{d}_s)$ and there exists $\ubar{\boldsymbol\nu}_s, \bar{\boldsymbol\nu}_s \in \mathbb{R}_+^{N}, \ubar{\boldsymbol\mu}_s, \bar{\boldsymbol\mu}_s \in \mathbb{R}_+^{L}, \lambda_s \in \mathbb{R}$ such that:
\begin{subequations}
\label{eq:market-fr-kkt}
\begin{align}
\left(c_n^{p\prime}((\gamma_n^p/\alpha_n^p)q_{s,n}^p), n\in N\right) + \bar{\boldsymbol\nu}_s - \ubar{\boldsymbol\nu}_s - \boldsymbol\pi_s^p &= 0;
\label{eq:market-fr-peaker-foc}
\\
L^p(\mathbf{q}_s^{p},\ubar{\boldsymbol\nu}_s,\bar{\boldsymbol\nu}_s) &= 0;
\\
L^f(\mathbf{q}^{b}, \mathbf{q}_s^{p}, \ubar{\boldsymbol\mu}_s,\bar{\boldsymbol\mu}_s) &= 0,
\end{align}
where:
\begin{align}
\boldsymbol\pi_s^p = \boldsymbol\pi(\lambda_s,\ubar{\boldsymbol\mu}_s,\boldsymbol\mu_s).
\label{eq:market-fr-peaker-price}
\end{align}
\end{subequations}

\emph{Step 4: Showing that, at an equilibrium, the production schedule is the unique optimal solution to $\hat{ED}$-$\hat{FR}$.} Let $(\mathbf{q}^b,\mathbf{q}^p)$ be an optimal solution to $\hat{ED}(\mathbf{d}_1)$ and $\mathbf{r}_s^p$ be an optimal solution to $\hat{FR}(\mathbf{q}^b,\mathbf{q}^p,\mathbf{d}_s)$. We will show that:
\begin{align*}
\mathbf{q}^b
&=
\left([\alpha_n^b s_n^{b}(\pi_{n}^b)]_{\ubar{q}_n^b}^{\bar{q}_n^b}, n\in N\right);
\\
\mathbf{q}^p
&=
\left([\alpha_n^p s_n^{p}(\pi_{1,n}^p)]_{\ubar{q}_n^p}^{\bar{q}_n^p}, n\in N\right);
\\
\mathbf{r}_{s}^p
&=
\left([\alpha_n^p s_n^{p}(\pi_{s,n}^p)]_{\ubar{q}_n^p}^{\bar{q}_n^p} - [\alpha_n^p s_n^{p}(\pi_{1,n}^p)]_{\ubar{q}_n^p}^{\bar{q}_n^p}, n\in N\right).
\end{align*}
It suffices to show that, if $(\mathbf{q}^b,\mathbf{q}_1^p)$ is an optimal solution to~\eqref{eq:edhat} and $\mathbf{q}_s^p$ is an optimal solution to~\eqref{eq:frhat}, then:
\begin{align}
\mathbf{q}^b
&=
\left([\alpha_n^b s_n^{b}(\pi_{n}^b)]_{\ubar{q}_n^b}^{\bar{q}_n^b}, n\in N\right);
\label{eq:baseload-dispatch-optimal-solution}
\\
\mathbf{q}_{s}^p
&=
\left([\alpha_n^p s_n^{p}(\pi_{s,n}^p)]_{\ubar{q}_n^p}^{\bar{q}_n^p}, n\in N\right).
\label{eq:peaker-dispatch-optimal-solution}
\end{align}
By rewriting~\eqref{eq:market-ed-baseload-foc} for dispatch generator $n$, we infer that:
\begin{align*}
q_n^b
=
\alpha_n^b s_n^{b}\left(\pi_n^b + \ubar{\xi}_n/K - \bar{\xi}_n/K\right).
\end{align*}
If $\ubar{q}_n^b < q_n^b < \bar{q}_n^b$, then from~\eqref{eq:market-ed-baseload-comp-slack}, we infer that $\bar{\xi}_n = \ubar{\xi}_n = 0$, which implies that $q_n^b = \alpha_n^b s_n^{b}(\pi_n^b)$. If $q_n^b = \ubar{q}_n^b$, then from~\eqref{eq:market-ed-baseload-comp-slack}, we infer that $\bar{\xi}_n = 0$ and $\ubar{\xi}_n \geq 0$, which implies that $\ubar{q}_n^b = q_n^b = \alpha_n^b s_n^{b}(\pi_n^b + \ubar{\xi}_n/K)\geq \alpha_n^b s_n^{b}(\pi_n^b)$, where the last inequality follows from the fact that $c_n^b$ is strictly convex. If $q_n^b = \bar{q}_n^b$, then from~\eqref{eq:market-ed-baseload-comp-slack}, we infer that $\ubar{\xi}_n = 0$ and $\bar{\xi}_n \geq 0$, which implies that $\bar{q}_n^b = q_n^b = \alpha_n^b s_n^{b}(\pi_n^b - \bar{\xi}_n/K) \leq \alpha_n^b s_n^{b}(\pi_n^b)$, where the last inequality follows from the fact that $c_n^b$ is strictly convex. Hence, we conclude that $\mathbf{q}^b$ is given by~\eqref{eq:baseload-dispatch-optimal-solution}. By making similar arguments, we conclude that $\mathbf{q}_{s}^p$ is given by~\eqref{eq:peaker-dispatch-optimal-solution}.

\emph{Step 5: Showing that any production schedule $(\mathbf{q}^{b},\mathbf{q}^{p},\mathbf{r}^p)$ that solves $SYSTEM$ can be obtained using bids $(\boldsymbol\gamma^b,\boldsymbol\gamma^p)$ and the latter satisfy the characterizations in steps 1 to 3.} By Theorem~\ref{thm:decomposition}, $(\mathbf{q}^{b},\mathbf{q}^{p})$ is the unique solution to $ED(\mathbf{d}_1)$ and $\mathbf{r}_s^{p}$ is the unique solution to $FR(\mathbf{q}^{b},\mathbf{q}^p,\mathbf{d}_s)$. Under bids $(\boldsymbol\gamma^b,\boldsymbol\gamma^p)$, the problems $ED(\mathbf{d}_1)$ and $\hat{ED}(\mathbf{d}_1)$ are equivalent. Hence, $(\mathbf{q}^{b},\mathbf{q}^{p})$ is the unique solution to $\hat{ED}$, and by step 4, the production in the first time period is $(\mathbf{q}^b,\mathbf{q}^p)$. Under bids $(\boldsymbol\gamma^b,\boldsymbol\gamma^p)$, the problems $FR(\mathbf{q}^{b},\mathbf{q}^p,\mathbf{d}_s)$ and $\hat{FR}(\mathbf{q}^{b},\mathbf{q}^p,\mathbf{d}_s)$ are equivalent. Hence, $\mathbf{r}_s^{p}$ is the unique solution to $\hat{FR}(\mathbf{q}^{b},\mathbf{q}^p,\mathbf{d}_s)$, and by step 4, the recourse production is $\mathbf{r}_s^p$. Hence, the production schedule is $(\mathbf{q}^{b},\mathbf{q}^{p},\mathbf{r}^p)$.

It suffices to show that bids $(\boldsymbol\gamma^b,\boldsymbol\gamma^p)$ constitute an equilibrium. It is easy to check that $\boldsymbol\alpha^p = \boldsymbol\gamma^p$ and $\boldsymbol\alpha^b = \boldsymbol\gamma^b$ satisfy conditions~\eqref{eq:peaker-optimal-bid} and~\eqref{eq:baseload-optimal-bid} respectively for any prices $(\boldsymbol\pi^b,\boldsymbol\pi^p)$. Hence, simply choose $(\boldsymbol\pi^b,\boldsymbol\pi^p)$ based on equations~\eqref{eq:market-ed-kkt} and~\eqref{eq:market-fr-kkt}. This proves part (a) of the proposition.

\emph{Step 6: Showing that any bids $(\boldsymbol\alpha^b,\boldsymbol\alpha^p)$ that satisfy the characterizations in steps 1 to 3 give the same dispatch as that under bids $(\boldsymbol\gamma^b,\boldsymbol\gamma^p)$.} Suppose that $(\boldsymbol\alpha^b,\boldsymbol\alpha^p)$ satisfy the characterizations in step 4 with productions $(\mathbf{q}^b,\mathbf{q}_1^p,\ldots,\mathbf{q}_S^p)$, Lagrange multipliers $(\ubar{\boldsymbol\xi},\bar{\boldsymbol\xi},\ubar{\boldsymbol\nu},\bar{\boldsymbol\nu},\ubar{\boldsymbol\mu},\bar{\boldsymbol\mu},\boldsymbol\lambda)$, and prices $(\boldsymbol\pi^b,\boldsymbol\pi^p)$. We will construct $\ubar{\boldsymbol\xi}^\prime,\bar{\boldsymbol\xi}^\prime \in \mathbb{R}_+^N$ and $\ubar{\boldsymbol\nu}_1^{\prime},\bar{\boldsymbol\nu}_1^{\prime}\in\mathbb{R}_+^N$ such that:
\begin{subequations}
\label{eq:market-ed-alternate}
\begin{align}
\left(K c_n^{b\prime}(q_n^b), n\in N\right) + \bar{\boldsymbol\xi}^\prime - \ubar{\boldsymbol\xi}^\prime - K\boldsymbol\pi^b &= 0;
\label{eq:market-ed-baseload-foc-alternate}
\\
L^b(\mathbf{q}^b, \ubar{\boldsymbol\xi}^\prime, \bar{\boldsymbol\xi}^\prime) &= 0;
\label{eq:market-ed-baseload-comp-slack-alternate}
\\
\left(K c_n^{p\prime}(q_{1,n}^p), n\in N\right) + \bar{\boldsymbol\nu}_1^\prime - \ubar{\boldsymbol\nu}_1^\prime - K\boldsymbol\pi_1^p &= 0;
\label{eq:market-ed-peaker-foc-alternate}
\\
L^p(\mathbf{q}_1^{p},\ubar{\boldsymbol\nu}_1^\prime,\bar{\boldsymbol\nu}_1^\prime) &= 0,
\label{eq:market-ed-peaker-comp-slack-alternate}
\end{align}
\end{subequations}
and $\ubar{\boldsymbol\nu}_s^\prime,\bar{\boldsymbol\nu}_s^\prime \in \mathbb{R}_+^N$ for all $s\in S\setminus\{1\}$ such that:
\begin{subequations}
\label{eq:market-fr-alternate}
\begin{align}
\left(c_n^{p\prime}(q_{s,n}^p), n\in N\right) + \bar{\boldsymbol\nu}_s^\prime - \ubar{\boldsymbol\nu}_s^\prime - \boldsymbol\pi_s^p &= 0;
\label{eq:market-fr-peaker-foc-alternate}
\\
L^p(\mathbf{q}_s^{p},\ubar{\boldsymbol\nu}_s^\prime,\bar{\boldsymbol\nu}_s^\prime) &= 0,
\label{eq:market-fr-peaker-comp-slack-alternate}
\end{align}
\end{subequations}
which are the KKT conditions for~\eqref{eq:edhat} and~\eqref{eq:frhat} under bids $(\boldsymbol\gamma^b,\boldsymbol\gamma^p)$. Then, step 5 allows us to infer that the production schedule is an optimal solution to $SYSTEM$. Our construction is given by:
\begin{align*}
\ubar{\xi}_n^\prime 
&=
\begin{cases}
K\left(c_n^{b\prime}(\ubar{q}_n^b)-\pi_n^b\right), & \text{if} \ q_n^b = \ubar{q}_n^b;
\\ 
0, & \text{else},
\end{cases}
\\
\bar{\xi}_n^\prime
&=
\begin{cases}
K\left(\pi_n^b - c_n^{b\prime}(\bar{q}_n^b)\right), & \text{if} \ q_n^b = \bar{q}_n^b;
\\ 
0, & \text{else},
\end{cases}
\\
\ubar{\nu}_{1,n}^\prime 
&=
\begin{cases}
K\left(c_n^{p\prime}(\ubar{q}_{n}^p)-\pi_{1,n}^p\right), & \text{if} \ q_{1,n}^p = \ubar{q}_n^p;
\\ 
0, & \text{else},
\end{cases}
\\
\bar{\nu}_{1,n}^\prime
&=
\begin{cases}
K\left(\pi_{1,n}^p - c_n^{p\prime}(\bar{q}_n^p)\right), & \text{if} \ q_{1,n}^p = \bar{q}_n^p;
\\ 
0, & \text{else},
\end{cases}
\end{align*}
and:
\begin{align*}
\ubar{\nu}_{s,n}^\prime 
&=
\begin{cases}
c_n^{p\prime}(\ubar{q}_{n}^p)-\pi_{s,n}^p, & \text{if} \ q_{s,n}^p = \ubar{q}_n^p;
\\ 
0, & \text{else},
\end{cases}
\\
\bar{\nu}_{s,n}^\prime
&=
\begin{cases}
\pi_{s,n}^p - c_n^{p\prime}(\bar{q}_n^p), & \text{if} \ q_{s,n}^p = \bar{q}_n^p;
\\ 
0, & \text{else},
\end{cases}
\end{align*}
for all $s \in S\setminus\{1\}$. 

First, we show that $\ubar{\boldsymbol\xi}^\prime, \bar{\boldsymbol\xi}^{\prime}, \ubar{\boldsymbol\nu}_s^\prime,  \bar{\boldsymbol\nu}_s^\prime \geq 0$. Suppose $q_n^b = \ubar{q}_n^b$. Then, from~\eqref{eq:baseload-optimal-bid-a}, we infer that $c_n^{b\prime -1}(\pi_n^b) \leq \ubar{q}_n^b$, and since $c_n^b$ is strictly convex, we infer that $\pi_n^b \leq c_n^{b\prime}(\ubar{q}_n^b)$, and hence $\ubar{\xi}_n^\prime \geq 0$. Suppose $q_n^b = \bar{q}_n^b$. Then, from~\eqref{eq:baseload-optimal-bid-c}, we infer that $c_n^{b\prime -1}(\pi_n^b) \geq \bar{q}_n^b$, and since $c_n^b$ is strictly convex, we infer that $\pi_n^b \geq c_n^{b\prime}(\bar{q}_n^b)$, and hence $\bar{\xi}_n^\prime \geq 0$. By similar arguments, we infer that $\ubar{\nu}_{s,n}^\prime \geq 0$ and $\bar{\nu}_{s,n}^\prime \geq 0$.  

Second, we show that this construction satisfies~\eqref{eq:market-ed-alternate} and~\eqref{eq:market-fr-alternate}. It is easy to check that the complementary slackness conditions~\eqref{eq:market-ed-baseload-comp-slack-alternate},~\eqref{eq:market-ed-peaker-comp-slack-alternate},~\eqref{eq:market-fr-peaker-comp-slack-alternate} are satisfied. Suppose $\ubar{q}_n^b < c_n^{b\prime-1}(\pi_n^b) < \bar{q}_n^b$. From~\eqref{eq:baseload-optimal-bid-b}, we infer that $\alpha_n^b = \gamma_n^b$. From the fact that $q_n^b = \left[\alpha_n^b s_n^b(\pi_n^b)\right]_{\ubar{q}_n^b}^{\bar{q}_n^b} = c_n^{b\prime-1}(\pi_n^b)$, we infer that $\ubar{q}_n^b < q_n^b < \bar{q}_n^b$. From~\eqref{eq:market-ed-baseload-comp-slack}, we infer that $\ubar{\xi}_n = \bar{\xi}_n = 0$. Substituting into~\eqref{eq:market-ed-baseload-foc}, we infer that our construction satisfies~\eqref{eq:market-ed-baseload-foc-alternate}. Suppose $c_n^{b\prime-1}(\pi_n^b) \leq \ubar{q}_n^b$. From~\eqref{eq:baseload-optimal-bid-a}, we infer that $q_n^b = \ubar{q}_n^b$. Hence, our construction satisfies~\eqref{eq:market-ed-baseload-foc-alternate}. Suppose $\bar{q}_n^b \leq c_n^{b\prime-1}(\pi_n^b)$. From~\eqref{eq:baseload-optimal-bid-c}, we infer that $q_n^b = \bar{q}_n^b$. Hence, our construction satisfies~\eqref{eq:market-ed-baseload-foc-alternate}. Using similar arguments, we can infer that our construction satisfies~\eqref{eq:market-ed-peaker-foc-alternate} and~\eqref{eq:market-fr-peaker-foc-alternate}. This proves part (b) of the proposition.

\end{proof}









\end{document}